\documentclass[12pt]{amsart}

\usepackage{lipsum,subeqnarray}
\usepackage{amsfonts}
\usepackage{graphicx}
\usepackage{amsmath}
\usepackage{mathtools}
\usepackage{amssymb}
\usepackage{enumerate}
\usepackage{slashed}
\usepackage{graphicx}
\usepackage{newlfont}
\usepackage{amsrefs}
\usepackage{comment}
\usepackage[normalem]{ulem}
\usepackage{mathtools}
\usepackage{accents}
\usepackage{leftidx}
\usepackage{bbm}

\baselineskip=7.0mm
\setlength{\baselineskip}{1.09\baselineskip}

\setlength{\textwidth}{6.6in} \setlength{\textheight}{8.6in}
\hoffset=-0.83truein
\voffset=-0.1truein

\numberwithin{equation}{section}

\theoremstyle{plain}
\newtheorem{theorem}{Theorem}[section]
\newtheorem{lemma}{Lemma}

\newtheorem{cor}{Corollary}

\newtheorem{proposition}[theorem]{Proposition}
\usepackage{xcolor}

\theoremstyle{definition}

\theoremstyle{remark}

\usepackage{stackengine}
\usepackage{subeqnarray} 

\stackMath
\newcommand\tenq[2][1]{%
 \def\useanchorwidth{T}%
  \ifnum#1>1%
    \stackunder[0pt]{\tenq[\numexpr#1-1\relax]{#2}}{\scriptscriptstyle\sim}%
  \else%
    \stackunder[1pt]{#2}{\scriptscriptstyle\sim}%
  \fi%
}

\newcommand{\overbar}[1]{\mkern 1.6mu\overline{\mkern-1.6mu#1\mkern-1.6mu}\mkern 1.6mu}

\DeclareMathOperator*{\esssup}{ess\,sup}

\begin{document}
\title[Quasi-Relativistic Navier-Stokes Equations]{The Global Existence of Solutions to a Quasi-Relativistic Incompressible Navier-Stokes Model}
 
\author{Jaroslaw S. Jaracz} 
\begin{abstract}
   We introduce a new modified Navier-Stokes model in $3$ dimensions by modifying the convection term in the ordinary Navier-Stokes equations. This is done by replacing the convective term $(\textbf{u}\cdot \nabla) \textbf{u}$ by $(\textbf{v}\cdot \nabla)\textbf{u}$ with $\textbf{v}=c\textbf{u}/\sqrt{c^2+|\textbf{u}|^2}$ where $c$ is the speed of light. Thus we have that $|\textbf{v}|\leq c$ and for $|\textbf{u}|\ll c$ we have $\textbf{v} \approx \textbf{u}$. Thus the solutions to this system should yield a good approximation to the solutions of the ordinary Navier-Stokes equations under physically reasonable conditions. The modification of the convective term is a natural progression of the work done in \cite{JaraczLee}.  The property that $|\textbf{v}|\leq c$ embodies the notion that in relativity matter can't travel faster than the speed of light, giving the model its name. We prove that there exists a strong solution   $\textbf{u} \in L^2(0, T; \textbf{H}^2) \cap L^{\infty}(0, T; \textbf{V})$ with  $\textbf{u}' \in L^2(0, T; \textbf{L}^2)$ to our system of equations on either a smooth bounded domain $U\subset \textbf{R}^3$ or the flat $3$-torus $\mathbb{T}$ for any initial velocity $\textbf{u}_0 \in \textbf{V}$ and any forcing function $\textbf{f}\in L^2(0, T; \textbf{L}^2)$. No assumption on the smallness of the data is necessary. Here $\textbf{V}$ is the space of weakly divergence free vector fields with components in $H^1$ which vanish on the boundary. We also prove the uniqueness of this strong solution. Though our modification is somewhat ad-hoc, it suggests that though more complicated, equations incorporating aspects of special and general relativity might have better existence and uniqueness properties than the ordinary Navier-Stokes equations.  
\end{abstract}
\maketitle

\section{Introduction}\label{SEC:Intro}
The ordinary incompressible Navier-Stokes equations for a fluid of constant density take the form
\begin{align*}
    \partial_t \textbf{u} -\alpha \Delta \textbf{u} + (\textbf{u}\cdot \nabla)\textbf{u} + \nabla p &= \textbf{f} \\
    \nabla \cdot \textbf{u} &=0 \\
    \textbf{u}(x, 0)&=\textbf{u}_0(x)
\end{align*}
where $\textbf{u}=(u_1, u_2, u_3)$ is the velocity, $p$ is a scalar function representing the pressure and $\textbf{f}$ is some forcing function, with all quantities depending on $(x, t)=(x_1, x_2, x_3, t)$. The incompressibility condition $\nabla \cdot \textbf{u}=0$ rules out the possibility shock or pressure waves, and experimentally the model usually holds up well up to fluid speeds of around Mach $0.3\approx 100 \,m/s$, which is vanishingly small compared to the speed of light $c\approx 3 \times 10^8 \, m/s$.

As is well known, the existence and uniqueness of so-called strong solutions the the Navier-Stokes equations in $3$-dimensions has not been established without assuming that the initial data and forcing function are "small" in some appropriate sense. We introduce a modification to these equations which allows us to prove the existence and uniqueness of strong solutions without any such smallness assumptions.

The study of relativistic fluid mechanics dates all the way back to the work of Landau and Lifshitz \cite{LandauLifshitz} in the late 1950's. Lately, the focus has been on developing relativistic hydrodynamic equations which are stable and causal \cite{Disconzi}, \cite{Hoult}, with focus on well-posedness of the Cauchy problem for such equations. Of course, considering how little we know about the Navier-Stokes equations themselves, the applications of such relativistic fluid dynamics to terrestial applications will remain limited, at least in the short term.

In this paper, we study a model which is somewhere in between the ordinary Navier-Stokes equations and fully relativistic equations (much closer to the former than the latter). The simplest and probably most well known consequence of special and general relativity is that no ordinary physical objects can exceed the speed of light. We would like to somehow embody this simple notion in a modified version of the Navier-Stokes equations. 

In a recent paper \cite{JaraczLee} we studied a modified model of visco-elastic fluids. Based on physical and experimental observations, one of the modifications to the ordinary model was the change  
\begin{equation*}
    (\textbf{u}\cdot \nabla)\textbf{u} \rightarrow (J\textbf{u}\cdot \nabla)\textbf{u}
\end{equation*}
to the convective term in the Navier-Stokes equations. Here $J$ is a regularizing operator (simply a convolution with a smooth bump function supported in some small ball which serves to average out the velocity). This allowed for the proof of the existence of global and regular solutions to the modified system. The smoothening was justified both experimentally and theoretically. Experimentally, visco-elastic flows are observed to be quite smooth. Also, theoretically the smoothing can be justified by taking Brownian motion of the fluid molecules into account where the exchange of molecules between fluid parcels serves to average out some of the velocity.  

Thus, it is natural to apply other well known physical principles to the model. We propose the following modified Navier-Stokes equations
\begin{align} \label{QRNSM} \begin{split}
    \partial_t \textbf{u} -\alpha \Delta \textbf{u} + \left(\frac{c\textbf{u}}{ \sqrt{c^2 + |\textbf{u}|^2}    }\cdot \nabla\right)\textbf{u} + \nabla p &= \textbf{f} \\
    \nabla \cdot \textbf{u} &=0 \\
    \textbf{u}(x,0)&=\textbf{u}_0(x)\end{split}
\end{align}
where $c$ is the speed of light. Notice that for $|\textbf{u}|\ll c$ we have
\begin{equation}
    \textbf{u} \approx \frac{c\textbf{u}}{ \sqrt{c^2 + |\textbf{u}|^2}    }
\end{equation}
and so for velocities much smaller than the speed of light, the model should give results very close to the ordinary Navier-Stokes equations. 

Also notice that 
\begin{equation*}
    \left\vert \frac{c\textbf{u}}{ \sqrt{c^2 + |\textbf{u}|^2}    } \right\vert \leq  c
\end{equation*}
which partially encapsulates the relativistic notion that matter can't move faster than the speed of light. Thus we call this model the quasi-relativistic Navier-Stokes equations. It is only quasi-relativistic since in principle nothing prevents $|\textbf{u}|\geq c$. Of course, this is merely an approximation to real world fluid flow, just as the Navier-Stokes themselves are. The notion of incompressibility is on its own a physical impossibility.  

Whether the model is useful will depend on how closely it models real world situations, which is to be determined. However we mention that computational rheologists often drop the convection term when numerically simulating the Navier-Stokes equations by applying certain simplifying assumptions. Thus in the cases where the convective term can be dropped, there is no difference between our model and the ordinary Navier-Stokes equations.

We will consider our equations on a bounded domain $U \subset \mathbb{R}^3$ or on the $3$-dimensional torus $\mathbb{T}$. We collectively refer to these spaces as $\Omega$, since the proof is the same in both cases. Of course we require some kind of boundary condition for the equations and so we take the Dirichlet bundary condition
\begin{equation*}
    \textbf{u}\vert_{\partial \Omega}=0
\end{equation*}
which is of course vacuously true for the empty boundary when $\Omega = \mathbb{T}$.

\subsection{Main Result} 
The notation we use for the Sobolev spaces can be found in the next section. We now state our main theorem. 

\begin{theorem} \label{TheoremOmega}
Consider the system of equations \eqref{QRNSM} with Dirichlet boundary condition on either a smooth bounded domain $U \subset \mathbb{R}^3$ or on the $3$-dimensional torus $\mathbb{T}$. Then for any $\textbf{u}_0 \in \textbf{V}$ and $ \textbf{f} \in L^2(0, T; \textbf{L}^2)$ there exists a strong solution $\textbf{u} \in L^2(0, T; \textbf{H}^2) \cap L^{\infty}(0, T; \textbf{V})$ with  $\textbf{u}' \in L^2(0, T; \textbf{L}^2)$. Moreover, this solution is unique on $0\leq t \leq T$.
\end{theorem}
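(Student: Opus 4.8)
The plan is to construct the solution by the Faedo--Galerkin method and to exploit the single structural feature that distinguishes \eqref{QRNSM} from the ordinary equations, namely the uniform bound $|\textbf{v}|\le c$ where $\textbf{v}=c\textbf{u}/\sqrt{c^2+|\textbf{u}|^2}$. Let $\{\textbf{w}_k\}$ be the orthonormal basis of $\textbf{L}^2$ consisting of eigenfunctions of the Stokes operator $A=-P\Delta$ (with $P$ the Leray projection), and let $\textbf{u}_m=\sum_{k=1}^m d_k^m(t)\textbf{w}_k$ solve the projected system. Since the map $\textbf{u}\mapsto\textbf{v}$ is smooth with uniformly bounded derivatives, the resulting system of ODEs for the $d_k^m$ has a locally Lipschitz right-hand side, so Picard--Lindel\"of gives a unique local solution; the a priori bounds below then show it is global.

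First I would derive the basic energy estimate by testing the projected equation against $\textbf{u}_m$. Here the modified convective term is controlled \emph{without} relying on any cancellation: since $\|\textbf{v}\|_{L^\infty}\le c$, we have $|\int (\textbf{v}\cdot\nabla)\textbf{u}_m\cdot\textbf{u}_m\,dx|\le c\|\nabla\textbf{u}_m\|_{L^2}\|\textbf{u}_m\|_{L^2}$, and Young's inequality lets us absorb a multiple of $\|\nabla\textbf{u}_m\|_{L^2}^2$ into the viscous term $\alpha\|\nabla\textbf{u}_m\|_{L^2}^2$. Gr\"onwall then yields uniform bounds in $L^\infty(0,T;\textbf{L}^2)\cap L^2(0,T;\textbf{V})$.

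The decisive estimate is obtained by testing against $A\textbf{u}_m$. In the ordinary Navier--Stokes equations this is exactly where 3D regularity breaks down, because the convective term produces $\int(\textbf{u}\cdot\nabla)\textbf{u}\cdot A\textbf{u}$, which cannot be dominated by the dissipation without a smallness hypothesis. In our case the same computation gives $|\int(\textbf{v}\cdot\nabla)\textbf{u}_m\cdot A\textbf{u}_m\,dx|\le c\|\nabla\textbf{u}_m\|_{L^2}\|A\textbf{u}_m\|_{L^2}$, and since $c$ is a fixed constant, Young's inequality again absorbs $\tfrac{\alpha}{4}\|A\textbf{u}_m\|_{L^2}^2$ into the dissipation, leaving a term proportional to $\|\nabla\textbf{u}_m\|_{L^2}^2$. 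By Gr\"onwall we obtain uniform bounds in $L^\infty(0,T;\textbf{V})\cap L^2(0,T;\textbf{H}^2)$, using that $\|A\textbf{u}_m\|_{L^2}$ controls $\|\textbf{u}_m\|_{\textbf{H}^2}$ by Stokes regularity, with no restriction on the size of $\textbf{u}_0$ or $\textbf{f}$. Reading off the equation for $\textbf{u}_m'$ and using $\|(\textbf{v}\cdot\nabla)\textbf{u}_m\|_{L^2}\le c\|\nabla\textbf{u}_m\|_{L^2}$ then bounds $\textbf{u}_m'$ uniformly in $L^2(0,T;\textbf{L}^2)$.

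With these bounds in hand I would extract a subsequence converging weak-$*$ in the listed spaces and, by the Aubin--Lions lemma, strongly in $L^2(0,T;\textbf{V})$ (hence a.e.). The only point requiring care is the passage to the limit in the nonlinear term: since $\textbf{u}\mapsto\textbf{v}$ is bounded by $c$ and continuous, dominated convergence gives $\textbf{v}_m\to\textbf{v}$ strongly in $L^2(0,T;\textbf{L}^2)$, and pairing this with the weak convergence of $\nabla\textbf{u}_m$ identifies the limit of $(\textbf{v}_m\cdot\nabla)\textbf{u}_m$ as $(\textbf{v}\cdot\nabla)\textbf{u}$. This produces a strong solution. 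For uniqueness, writing $\textbf{w}=\textbf{u}_1-\textbf{u}_2$ and testing the difference equation against $\textbf{w}$, I would split the nonlinear difference as $(\textbf{v}_1\cdot\nabla)\textbf{w}+((\textbf{v}_1-\textbf{v}_2)\cdot\nabla)\textbf{u}_2$; the first piece is handled as above, while for the second I would use that $\textbf{u}\mapsto\textbf{v}$ is globally Lipschitz, so $|\textbf{v}_1-\textbf{v}_2|\le L|\textbf{w}|$, together with Ladyzhenskaya's inequality $\|\textbf{w}\|_{L^4}^2\le C\|\textbf{w}\|_{L^2}^{1/2}\|\nabla\textbf{w}\|_{L^2}^{3/2}$ and the bound $\textbf{u}_2\in L^\infty(0,T;\textbf{V})$. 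After absorbing gradient terms, Gr\"onwall forces $\textbf{w}\equiv 0$. The genuine obstacle in the whole argument is thus not the energy estimates---the bound $|\textbf{v}|\le c$ trivializes precisely the step that is fatal for the classical equations---but rather the bookkeeping in the limit passage and the uniqueness estimate, where the Lipschitz and boundedness properties of the regularizing map $\textbf{u}\mapsto\textbf{v}$ must be deployed in the right norms.
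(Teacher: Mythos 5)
Your proposal is correct and follows the same overall architecture as the paper (Galerkin approximation, energy estimates that exploit the uniform bound $|\textbf{v}|\le c$ on the modified convection velocity, compactness plus dominated convergence to pass to the limit in the nonlinearity, and uniqueness via the global Lipschitz property of $\textbf{u}\mapsto c\textbf{u}/\sqrt{c^2+|\textbf{u}|^2}$ combined with the $L^4$ interpolation inequality). You correctly identify the two load-bearing facts: that the lost skew-symmetry of the convective trilinear form is irrelevant because $\|\textbf{v}\|_{L^\infty}\le c$ already gives $|B[\textbf{u},\textbf{u},\textbf{u}]|\le c\,C\|\nabla\textbf{u}\|\,\|\textbf{u}\|$, and that the Lipschitz estimate $|\textbf{v}_1-\textbf{v}_2|\le L|\textbf{u}_1-\textbf{u}_2|$ drives the uniqueness argument. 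The one genuinely different step is the higher-regularity estimate: you test the Galerkin system against $A\textbf{u}_m$, which requires taking the basis to consist of Stokes eigenfunctions so that $A\textbf{u}_m$ stays in the $m$-dimensional Galerkin space, and this delivers $L^\infty(0,T;\textbf{V})\cap L^2(0,T;\textbf{H}^2)$ in one stroke, with the bound on $\textbf{u}_m'$ read off from the equation afterwards. The paper instead uses a general smooth divergence-free basis, tests against $\textbf{u}_m'$ to obtain $\textbf{u}'\in L^2(0,T;\textbf{L}^2)$ and $\textbf{u}\in L^\infty(0,T;\textbf{V})$ first, and only then recovers $\textbf{H}^2$ regularity a posteriori by viewing the equation at a.e.\ fixed time as a stationary elliptic problem $A[\textbf{u},\textbf{v}]=(\textbf{h},\textbf{v})$ with $\textbf{h}\in\textbf{L}^2$. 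The two routes are equivalent in strength here; yours is slightly more economical but leans on the spectral theory of the Stokes operator and on the regularity theory for the Stokes problem ($\|\textbf{u}\|_{\textbf{H}^2}\le C\|A\textbf{u}\|_{\textbf{L}^2}$ on a smooth domain), whereas the paper's choice of basis keeps the Galerkin construction elementary at the cost of an extra elliptic-regularity step at the end.
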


There is no difference in the proofs for $\Omega$ and $\mathbb{T}$. The theorem collects the results of Propositions \ref{PropositionWeakExistence}, \ref{HigherRegularity}, and \ref{PropositionUniqueness} and thus the proof is contained therein. 

We also briefly remark on the terminology which might be slightly confusing to those who have not seen it before. A velocity satisfying 
\begin{equation*}
    \textbf{u}\in L^2(0, T; \textbf{V}), \quad \text{with} \quad \textbf{u}'\in L^2(0, T; \textbf{V}')
\end{equation*}
is called a \textit{weak} solution. On the other hand, a velocity satisfying
\begin{equation*}
  \textbf{u} \in L^2(0, T; \textbf{H}^2) \cap L^{\infty}(0, T; \textbf{V}), \quad \text{with} \quad \textbf{u}' \in L^2(0, T; \textbf{L}^2)
\end{equation*}
is referred to as a \textit{strong} solution, which is the standard nomenclature in the field. Both of these are to be contrasted with \textit{classical} solutions, which are solutions having two ordinary derivatives in space and one in time.  

\subsection{Notation}

We will be dealing with functions of both space and time, where space is $3$-dimensional. Sometimes when convenient, we will suppress one (usually the space) or both variables. Thus for example we would write $f(x, t)=f(t)=f$. Derivatives with respect to time will be denoted by $\partial_t, \partial_t^2$, etc., while derivatives with respect to space will be denoted by $\partial_{x^i}=\partial_i$ or $D^\lambda$ where $\lambda$ is a multi-index. As usual,  we use $\nabla=(\partial_{1}, \partial_{2}, \partial_{3})$ for the gradient operator, and $\Delta$ for the Laplacian.

We will let $\Omega=U, \mathbb{T}$ be the space under consideration. As mentioned, $U \subset \mathbb{R}^3$ is a smooth bounded domain and $\mathbb{T}$ is the $3$-dimensional torus. More generally, one would consider functions in $\mathbb{R}^3$ with some periodicity. Without loss of generality, we can take these functions to be $(1, 1, 1)$ periodic and then identify the cube $[0,1]^3$ with the three dimensional torus $\mathbb{T}$. Thus sometimes one can consider the system of equations as a system on $\mathbb{T}$ or as a periodic system in $\mathbb{R}^3$. Sometimes this latter point of view can be useful, though we do not use it in this paper.   

As usual, for $m\geq 1$ and $p\geq 1$ the symbol $W^{m, p}(\Omega)$ denotes the Sobolev space consisting of (real valued) functions on $\Omega$ having weak derivatives of order $m$ lying in $L^p(\Omega)$. In the case of $\mathbb{T}$ these can be lifted to periodic functions of $\mathbb{R}^3$, if desired, in the obvious way. We shall work with the case for $p=2$, and as such will denote the resulting (real) Hilbert space by $H^m(\Omega) := W^{m, 2}(\Omega)$. We also let $H_0^m(\Omega)$ denote the subspace of $H^m(\Omega)$ which consists of functions vanishing on $\partial \Omega$ in the trace sense. Of course, for the torus $H_0^m(\mathbb{T})= H^m(\mathbb{T})$. Since the proof of our theorem is the same for both $U$ and $\mathbb{T}$ we suppress $\Omega$ from the notation, and simply write $L^2$, $H^m$, etc.

We will be dealing with velocities having three components so we define
\begin{equation*}
\textbf{H}^m = H^m \times H^m \times H^m 
\end{equation*} 
with the Hilbert norm given by 
\begin{equation*}
\Vert \textbf{f}\Vert_{\textbf{H}^m}^2 = \|\left(f_1, f_2, f_3 \right) \|_{\mathbb{H}^m}^2 = \|f_1\|_{H^m}^2 + \|f_2\|_{H^m}^2 + \|f_3\|_{H^m}^2 . 
\end{equation*}
We also have
\begin{equation*}
\textbf{H}^m_0 = H^m_0 \times H^m_0 \times H^m_0 
\end{equation*} 
with the same norm. For $m=0$ we write $\textbf{L}^2$.

We are interested in the case of incompressible flows for which the velocity $\textbf{u}$ satisfies
\begin{equation*}
\nabla \cdot \textbf{u} = 0.
\end{equation*}
Since our velocities lie in $\textbf{H}^m$, where we mean that the above condition holds in the weak sense. Therefore, we define
\begin{equation*}
\mathcal{V} = \lbrace \textbf{v} \in C_c^\infty (\Omega) \times C_c^\infty (\Omega) \times C_c^\infty (\Omega) \; | \; \nabla \cdot \textbf{v}=0 \rbrace \subset \textbf{H}^1_0, 
\end{equation*}
and then take the closure of this subspace with respect to the $\textbf{H}^1_0$ norm. We denote the resulting space by
\begin{equation*}
\textbf{V} \coloneqq \overbar{\mathcal{V}} \subset \textbf{H}^1_0, 
\end{equation*}
and so 
\begin{equation*}
\textbf{V} = \lbrace \textbf{u} \in \textbf{H}^1_0 \; | \; \nabla \cdot \textbf{u} =0 \; \text{almost everywhere} \rbrace.
\end{equation*}
Also, we let $\textbf{V}' $ be the dual space of $\textbf{V} $. 
We also define 
\begin{equation*}
    H \coloneqq \overbar{\mathcal{V}} \subset \textbf{L}^2
\end{equation*}
meaning the closure of $\mathcal{V}$ with respect to the $\textbf{L}^2$ norm. This notation is used to be similar to the notation used for these spaces in \cite{temam}.

Also we let
\begin{equation*}
    |\nabla \textbf{u}|^2 \coloneqq \sum_{i, j=1}^3 |\partial_j u_i|^2
\end{equation*}
and then denote
\begin{equation*}
    \Vert \nabla \textbf{u} \Vert_{\textbf{L}^2}^2 = \int_\Omega  |\nabla \textbf{u}|^2 \, dx
\end{equation*}
and so we can write
\begin{equation*}
    \Vert \textbf{u} \Vert_{\textbf{H}^1}^2 = \Vert \textbf{u} \Vert_{\textbf{L}^2}^2 + \Vert \nabla \textbf{u} \Vert_{\textbf{L}^2}^2
\end{equation*}

Oftentimes, when there is no chance of confusion, we will simply write 
\begin{equation*}
    \Vert \cdot \Vert_{\textbf{L}^2} = \Vert \cdot \Vert
\end{equation*}
so unless a space is specified, the norm is assumed to be the $\textbf{L}^2$ norm. We also write
\begin{equation*}
    (\cdot, \cdot)_{\textbf{L}^2}=(\cdot, \cdot)
\end{equation*}
for the $\textbf{L}^2$ inner product defined in the obvious way. 
Finally, $\langle \textbf{u}, \textbf{w} \rangle$ denotes the pairing for $\textbf{u}\in \textbf{V}$ and $\textbf{w} \in \textbf{V}'$.

We shall also use the standard notation for spaces involving time. Let $Z$ denote a real Banach space. The space $L^p(0, T; Z)$ consists of all measurable functions $\textbf{u}: \left[ 0, T \right] \rightarrow Z$ with
\begin{equation*}
\Vert \textbf{u} \Vert_{L^p(0, T; Z)} \coloneqq \left( \int_{0}^{T} \Vert \textbf{u}(t) \Vert^p_Z \, dt \right)^{1/p} < \infty
\end{equation*} 
for $1\leq p<\infty$, and 
\begin{equation*}
    \Vert \textbf{u} \Vert_{L^\infty (0, T; Z)} \coloneqq \esssup_{0\leq t\leq T} \Vert \textbf{u}(t) \Vert_Z < \infty
\end{equation*}
for $p=\infty$. The space $C(\left[ 0, T \right]; Z)$
comprises all continuous functions $\textbf{u}(t):[0, T]\rightarrow Z$ with
\begin{equation*}
\Vert \textbf{u} \Vert_{C([0, T]; Z)} \coloneqq \max_{0\leq t \leq T} \Vert \textbf{u}(t) \Vert_Z < \infty. 
\end{equation*}
Now let $\textbf{u} \in L^1(0, T; Z)$. We say $\textbf{v} \in L^1(0, T; Z)$ is the weak time derivative of $\textbf{u}$ provided that
\begin{equation*}
\int_{0}^{T} w'(t)\textbf{u}(t) \, dt=-\int_{0}^{T} w(t)\textbf{v}(t) \, dt, 
\end{equation*}
for all scalar test functions $w \in C^\infty_c(0, T)$. The Sobolev space $
W^{1, p}\left( 0, T; Z \right)$ consists of all functions $\textbf{u}\in L^p(0, T; Z)$ such that $\textbf{u}'$ exists in the weak sense and belongs to $L^p(0, T; Z)$. Furthermore,
\begin{equation*}
\Vert \textbf{u} \Vert_{W^{1, p}(0, T; Z)} \coloneqq 
\begin{cases} 
\left( \int_{0}^{T} \Vert \textbf{u}(t) \Vert_Z^p + \Vert \textbf{u}'(t)\Vert_Z^p \, dt \right)^{1/p} & \left( 1\leq p< \infty\right) \\
\esssup_{0\leq t\leq T} \left( \Vert \textbf{u}(t) \Vert_Z + \Vert \textbf{u}'(t) \Vert_Z \right)& \left(p=\infty \right). 
\end{cases}
\end{equation*}
In addition, we write $H^1(0, T; Z)=W^{1, 2}(0, T; Z)$. 

As usual, since our spaces are products of separable Hilbert spaces, they are Hilbert spaces themselves. Thus each of these spaces has some countable orthonormal basis, and thus solutions to PDEs can be constructed in the usual way using Galerkin approximations.

\section{Proof of Theorem \ref{TheoremOmega}   }
As mentioned, the proof is the same for $U$ or $\mathbb{T}$ and so we collectively refer to these spaces as $\Omega$.
 The method of proof follows Section 7.1 in \cite{evans} and Chapter 3 in \cite{temam}. As such, for those familiar, the proof can be viewed as merely a review of standard PDE methods. Nevertheless, since the model is new and we have introduced quite a strong non-linearity, we do need to check that all of the standard methods work, and thus we have spared no detail.

\subsection{Existence of a Weak Solution}
For the moment we assume that 
\begin{equation} \label{fCondition}
    \textbf{f}\in L^2(0, T; \textbf{V}')
\end{equation}
and this is the minimum regularity on $\textbf{f}$ we require. Later on, when we wish to obtain higher regularity for our solution, we will assume a stronger condition on $\textbf{f}$. 

We now wish to weakly solve the system of equations
\begin{align} \label{QRNSMOmega} \begin{split}
    \partial_t \textbf{u} -\alpha \Delta \textbf{u} + \left(\frac{c\textbf{u}}{ \sqrt{c^2 + |\textbf{u}|^2}    }\cdot \nabla\right)\textbf{u} + \nabla p &= \textbf{f} \\
    \nabla \cdot \textbf{u} &=0 \\
    \textbf{u}(x,0)&=\textbf{u}_0(x) \\
    \textbf{u} \vert_{\partial \Omega} &= 0 
    \end{split}
\end{align}
in $\Omega$ with Dirichlet boundary condition. There are two equivalent ways of defining what we mean by this. 
First we define
\begin{equation*}
    A[\textbf{u}, \textbf{v}] \coloneqq \alpha \sum_{i=1}^3 \int_{\Omega} \nabla u_i \cdot \nabla v_i \, dx
\end{equation*}
and 
\begin{equation*}
    B[\textbf{y}, \textbf{u}, \textbf{v}]\coloneqq \int_{\Omega} \left[ \left(\frac{c\textbf{y}}{ \sqrt{c^2 + |\textbf{y}|^2}    }\cdot \nabla\right)\textbf{u} \right] \cdot \textbf{v} \, dx = \sum_{i=1}^3 \sum_{j=1}^3 \int_{\Omega} \frac{c y_j (\partial_j u_i) v_i  }   { \sqrt{c^2 + |\textbf{y}|^2}    } \, dx.
\end{equation*}
Notice that $A[\textbf{u}, \textbf{v}]$ is linear in both arguments, while $ B[\textbf{y}, \textbf{u}, \textbf{v}]$ is linear in $\textbf{u}$ and $\textbf{v}$ but not in $\textbf{y}$. However, we see we can make the estimate
\begin{equation} \label{BBound}
    \left|B[\textbf{y}, \textbf{u}, \textbf{v}]\right| \leq C_B \int_{\Omega} |\nabla \textbf{u}| |\textbf{v}| \, dx \leq C_B \Vert \nabla \textbf{u} \Vert \Vert \textbf{v} \Vert
\end{equation}
where the constant $C_B>0$ depends only on $c$ and the dimension, which is $3$ in our case. In addition we have the estimate
\begin{equation} \label{ABound}
    |A[\textbf{u}, \textbf{v}]| \leq \alpha \Vert \nabla \textbf{u} \Vert \Vert \nabla \textbf{v} \Vert
\end{equation}

Let $\langle \cdot, \cdot \rangle$ denote the bilinear pairing between $\textbf{V}'$ and $\textbf{V}$. For $\textbf{u}\in \textbf{V}$ we see that \begin{equation*}
    A[\textbf{u}, \cdot], \, B[\textbf{u}, \textbf{u}, \cdot] \in \textbf{V}'
\end{equation*}
and so we can express these as elements $A(\textbf{u})$ and $B(\textbf{u})$ such that
\begin{equation*}
    \langle A(\textbf{u}), \textbf{v}\rangle \coloneqq A[\textbf{u}, \textbf{v}], \quad  \langle B(\textbf{u}), \textbf{v}\rangle \coloneqq B[\textbf{u}, \textbf{v}]
\end{equation*}

One way of defining a weak solution is to find a function 
\begin{equation*}
     \textbf{u}\in L^2(0, T; \textbf{V})
\end{equation*}
which satisfies
\begin{align} \label{weakForm} 
     \frac{d}{dt}(\textbf{u}, \textbf{v} ) + A[\textbf{u}, \textbf{v}] + B[\textbf{u}, \textbf{u}, \textbf{v}]&=\langle \textbf{f}, \textbf{v} \rangle \\ \label{weakFormU}
\textbf{u}(0)&=\textbf{u}_0 
\end{align}
for $\textbf{f}  \in \textbf{V}' $ for any $\textbf{v} \in \textbf{V}$ for almost every $0\leq t \leq T$.
Notice that formally due to the incompressibility of $\textbf{v}$ the term containing the pressure disappears. 

Another way of defining weak solutions is to find functions
\begin{equation*}
    \textbf{u}\in L^2(0, T; \textbf{V}), \quad \text{with} \quad \textbf{u}'\in L^2(0, T; \textbf{V}')
\end{equation*}
such that 
\begin{align} \label{weakForm2} 
    \textbf{u}'+A(\textbf{u})+B(\textbf{u})&=\textbf{f} \\ \label{weakFormU2}
\textbf{u}(0)&=\textbf{u}_0 
\end{align}
for $\textbf{f}  \in \textbf{V}' $ for a.e. time $0\leq t \leq T$. In view of \eqref{BBound} and \eqref{ABound} it is easy to show that these two formulations are actually equivalent.

\begin{proposition} \label{PropositionWeakExistence}
Suppose that $\textbf{f}\in L^2(0, T; \textbf{V}\,')$ and $\textbf{u}_0 \in H$. Then the problem \eqref{weakForm}, \eqref{weakFormU} has a weak solution with $ \textbf{u} \in L^2(0, T; \textbf{V}) \cap L^\infty (0, T; H), \,   \textbf{u}' \in L^2(0, T; \textbf{V}\,')$. Moreover, $\textbf{u}\in C([0, T]; H)$.
\end{proposition}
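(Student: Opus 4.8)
The plan is to construct the weak solution by the Faedo--Galerkin method, exactly along the lines of \cite{evans} and \cite{temam}, the crucial simplification over the ordinary Navier--Stokes equations being the uniform bound \eqref{BBound}: because $|c\textbf{y}/\sqrt{c^2+|\textbf{y}|^2}|\leq c$, the convective term behaves subcritically and no delicate structural cancellation is needed. First I would fix the orthonormal basis $\{\textbf{w}_k\}$ of $H$ consisting of eigenfunctions of the Stokes operator (so the $\textbf{w}_k$ are also orthogonal in $\textbf{V}$), and seek $\textbf{u}_m(t)=\sum_{k=1}^m d_k^m(t)\textbf{w}_k$ solving the finite-dimensional system $(\textbf{u}_m',\textbf{w}_k)+A[\textbf{u}_m,\textbf{w}_k]+B[\textbf{u}_m,\textbf{u}_m,\textbf{w}_k]=\langle\textbf{f},\textbf{w}_k\rangle$ with $\textbf{u}_m(0)$ the $H$-projection of $\textbf{u}_0$. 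Since the map $\textbf{y}\mapsto c\textbf{y}/\sqrt{c^2+|\textbf{y}|^2}$ is smooth and bounded, the right-hand side of this ODE system is Carath\'eodory with a globally bounded nonlinearity, so a Carath\'eodory/Picard argument gives a local solution that the a priori estimate below extends to all of $[0,T]$.

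The second step is the energy estimate. Testing with $\textbf{u}_m$ itself yields $\tfrac12\tfrac{d}{dt}\|\textbf{u}_m\|^2+\alpha\|\nabla\textbf{u}_m\|^2=\langle\textbf{f},\textbf{u}_m\rangle-B[\textbf{u}_m,\textbf{u}_m,\textbf{u}_m]$. Unlike the ordinary equations, the trilinear term does \emph{not} vanish here, since $c\textbf{u}/\sqrt{c^2+|\textbf{u}|^2}$ is not divergence free; but \eqref{BBound} controls it by $C_B\|\nabla\textbf{u}_m\|\,\|\textbf{u}_m\|$, and a Young splitting absorbs the $\|\nabla\textbf{u}_m\|$ factors into the viscous term so that Gronwall's inequality produces bounds for $\textbf{u}_m$ in $L^\infty(0,T;H)\cap L^2(0,T;\textbf{V})$ that are uniform in $m$. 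Then, reading the equation as $\textbf{u}_m'=P_m(\textbf{f}-A(\textbf{u}_m)-B(\textbf{u}_m))$ and using \eqref{ABound}, \eqref{BBound} together with the uniform boundedness of the projections $P_m$ on $\textbf{V}'$ afforded by the Stokes basis, I would derive a uniform bound on $\textbf{u}_m'$ in $L^2(0,T;\textbf{V}')$.

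The third step is extraction of limits and passage to the limit in \eqref{weakForm2}. Along a subsequence, the uniform bounds give $\textbf{u}_m\rightharpoonup\textbf{u}$ in $L^2(0,T;\textbf{V})$, weak-$\ast$ in $L^\infty(0,T;H)$, and $\textbf{u}_m'\rightharpoonup\textbf{u}'$ in $L^2(0,T;\textbf{V}')$. Because the embedding $\textbf{V}\hookrightarrow H$ is compact (Rellich), the Aubin--Lions lemma gives strong convergence $\textbf{u}_m\to\textbf{u}$ in $L^2(0,T;H)$. The linear terms pass to the limit by weak convergence; for the nonlinear term I would exploit that the coefficient $c\textbf{u}_m/\sqrt{c^2+|\textbf{u}_m|^2}$ converges strongly in $\textbf{L}^2$ (the defining map is globally Lipschitz and bounded, so strong convergence of $\textbf{u}_m$ transfers), while $\nabla\textbf{u}_m\rightharpoonup\nabla\textbf{u}$ weakly, and a strong-times-weak product identifies the limit as $B[\textbf{u},\textbf{u},\cdot]$. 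Testing first against finite sums $\sum_{k\leq N}c_k(t)\textbf{w}_k$ with smooth $c_k$ and then invoking density shows that $\textbf{u}$ satisfies \eqref{weakForm2}.

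Finally, since $\textbf{u}\in L^2(0,T;\textbf{V})$ with $\textbf{u}'\in L^2(0,T;\textbf{V}')$, the Lions--Magenes interpolation theorem gives $\textbf{u}\in C([0,T];H)$, which proves the last assertion and renders $\textbf{u}(0)$ meaningful; integrating \eqref{weakForm2} against a scalar time test function vanishing at $T$ and comparing with the Galerkin relation, using that $\textbf{u}_m(0)\to\textbf{u}_0$ in $H$, identifies $\textbf{u}(0)=\textbf{u}_0$. I expect the main obstacle to be the passage to the limit in the nonlinear term: securing the $L^2(0,T;\textbf{V}')$ bound on $\textbf{u}_m'$ (hence the Aubin--Lions compactness) and verifying the strong convergence of the nonlinear coefficient are the delicate points, though both are eased considerably by the boundedness and Lipschitz character of $c\textbf{y}/\sqrt{c^2+|\textbf{y}|^2}$.
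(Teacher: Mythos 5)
Your proposal is correct and follows the same Galerkin skeleton as the paper, but two of the key technical steps are executed by genuinely different means. First, the compactness step: you take the Stokes eigenfunctions as the Galerkin basis, which makes the projections $P_m$ uniformly bounded on $\textbf{V}'$ and therefore lets you read off a uniform bound on $\textbf{u}_m'$ in $L^2(0,T;\textbf{V}')$ directly from the equation; Aubin--Lions then gives the strong convergence $\textbf{u}_m\to\textbf{u}$ in $L^2(0,T;H)$. The paper instead uses a generic basis drawn from $\mathcal{V}$ (orthonormalized in $\textbf{V}$), obtains the strong convergence \eqref{StrongConvergenceInH} by invoking the fractional-time-derivative compactness argument of (3.3.41) in \cite{temam}, and only derives $\textbf{u}'\in L^2(0,T;\textbf{V}')$ \emph{a posteriori} from the limit equation. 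Your route is self-contained and arguably cleaner, at the small cost of needing the spectral theory of the Stokes operator (and of losing the convenience that the basis elements are compactly supported and smooth, which the paper uses to see that the ODE nonlinearity is smooth in the coefficients). Second, the nonlinear limit: you pass to the limit by observing that $\textbf{y}\mapsto c\textbf{y}/\sqrt{c^2+|\textbf{y}|^2}$ is bounded and globally Lipschitz (this is exactly the content of \eqref{AppendixInequality}), so the coefficient converges strongly in $\textbf{L}^2(\Omega_T)$ while $\nabla\textbf{u}_m$ converges weakly, and strong-times-weak identifies the limit. The paper reaches the same conclusion by extracting an a.e.-convergent subsequence, applying dominated convergence (using only the uniform bound $|c\textbf{y}/\sqrt{c^2+|\textbf{y}|^2}|\leq c$), and then invoking the abstract weak--strong convergence Lemma \ref{WeakBanachConvergence}. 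Both arguments are valid; yours exploits the Lipschitz structure more directly, the paper's uses only boundedness. The remaining steps --- the energy estimate with \eqref{BBound} absorbing the non-vanishing trilinear term, Gronwall, the identification of the initial datum via time test functions with $\psi(T)=0$, and the continuity $\textbf{u}\in C([0,T];H)$ from the Lions--Magenes embedding --- coincide with the paper's.
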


\begin{proof}

We construct approximate solutions using the Galerkin method, derive the appropriate energy estimates, and take the limit. Since $\textbf{V}$ is a separable Hilbert space and $\mathcal{V}$ is dense in $\textbf{V}$, we can select a countable basis 
\begin{equation*}
    \lbrace \textbf{w}_i \rbrace_{i=1}^{\infty} \subset \mathcal{V} \subset \textbf{V}
\end{equation*}
and moreover we can choose this basis so that it is orthonormal in  $\textbf{V}$. We now construct approximate solutions of the form
\begin{equation*}
    \textbf{u}_m = \sum_{i=1}^m g_{im}(t) \textbf{w}_i
\end{equation*}
where the functions $g_{im}(t)$ satisfy the a certain system of differential equations. Before that however, let $\textbf{u}_{0m}$ be the projection of $\textbf{u}_0$ onto $\text{span} \lbrace \textbf{w}_1, \dots, \textbf{w}_m \rbrace$. Now we write  
\begin{equation*}
    \textbf{u}_{m0}=\sum_{i=1}^m c_i \textbf{w}_i
\end{equation*}
To obtain the system of equations for the $g_{im}$ we substitute $\textbf{u}_m$ for $\textbf{u}$ and $\textbf{w}_k$ for $\textbf{v}$ in \eqref{weakForm} which yields
\begin{align} \label{WeakForm2}
(\textbf{u}_m', \textbf{w}_k) + A[\textbf{u}_m, \textbf{w}_k] + B[\textbf{u}_m, \textbf{u}_m, \textbf{w}_k]&=\langle \textbf{f}, \textbf{w}_k \rangle \\ \label{WeakForm2u}
\textbf{u}_m(0)&=\textbf{u}_{m0} 
\end{align}
which therefore gives the system of equations
\begin{align} \label{ODE} \begin{split}
    \sum_{i=1}^m (\textbf{w}_i, \textbf{w}_k) g'_{im}(t)  + \sum_{i=1}^m g_{im} A[\textbf{w}_i, \textbf{w}_k] + \sum_{i=1}^m g_{im} B\left[\textbf{u}_m, \textbf{w}_i, \textbf{w}_k\right] &= f_k(t) \\
    g_{km}(0)&=c_k \end{split}
    \end{align}
where $f_k(t)\coloneqq \langle \textbf{f}(t), \textbf{w}_k \rangle$. It is easy to see that 
\begin{equation*}
    \sum_{i=1}^m g_{im} B\left[\textbf{u}_m, \textbf{w}_i, \textbf{w}_k\right] = \sum_{j=i}^m g_{im} B\left[\sum_{j=1}^m g_{jm}(t) \textbf{w}_j, \textbf{w}_i, \textbf{w}_k\right]
\end{equation*}
is smooth in the $g_{lm}$ (this is especially easy to see since $\textbf{w}_i$ were chosen to be smooth). 

Moreover, since the $\textbf{w}_i$ are linearly independent, the matrix $\beta$ with components
\begin{equation*}
    \beta_{ij} \coloneqq (\textbf{w}_i, \textbf{w}_j)
\end{equation*}
is nonsingular and thus has an inverse $\beta^{-1}=\beta^{ij}$. Multiplying \eqref{ODE} by $\beta^{-1}$ yields a first order system of ODEs which is smooth in the $g_{im}$. As a result, the system of equations \eqref{ODE} has a unique absolutely continuous solution existing on some maximal interval $[0, t_m]$. If $t_m<T$, then it must be that $\Vert \textbf{u}_m(t) \Vert \rightarrow \infty$ as $t \nearrow t_m$. We will now derive the usual apriori estimates which will show this does not happen, and so in fact $t_m=T$ for all $m$.  

We take the first equation of \eqref{WeakForm2}, multiply by $g_{km}$ and sum over $k$ to obtain
\begin{equation*}
      \frac{1}{2} \frac{d}{dt} \Vert \textbf{u}_m (t) \Vert^2 + A[\textbf{u}_m, \textbf{u}_m] + B[\textbf{u}_m, \textbf{u}_m, \textbf{u}_m]=\langle \textbf{f}, \textbf{u}_m \rangle 
\end{equation*}
which we rewrite as 
\begin{equation*}
      \frac{1}{2} \frac{d}{dt} \Vert \textbf{u}_m (t) \Vert^2 + \alpha \Vert \nabla \textbf{u}_m \Vert^2 =\langle \textbf{f}, \textbf{u}_m \rangle  -  B[\textbf{u}_m, \textbf{u}_m, \textbf{u}_m].
\end{equation*}
Using \eqref{BBound} we can then write
\begin{align*}
      \frac{1}{2} \frac{d}{dt} \Vert \textbf{u}_m (t) \Vert^2 &+ \alpha \Vert \nabla \textbf{u}_m \Vert^2 \leq \Vert \textbf{f} \Vert_{\textbf{V}'} \Vert \textbf{u}_m \Vert_{ \textbf{V}} + C_B \Vert \nabla \textbf{u}_m \Vert \Vert \textbf{u}_m \Vert \\
       &\leq \frac{\Vert \textbf{f} \Vert^2_{\textbf{V}'}}{4\varepsilon} +  \varepsilon\Vert \textbf{u}_m \Vert_{ \textbf{V}}^2 + C_B \varepsilon \Vert \nabla \textbf{u}_m \Vert^2 +\frac{C_B \Vert \textbf{u}_m \Vert^2}{4\varepsilon} \\
       &= \frac{\Vert \textbf{f} \Vert^2_{\textbf{V}'}}{4\varepsilon} +  \varepsilon(\Vert \textbf{u}_m \Vert^2+ \Vert \nabla \textbf{u}_m \Vert^2) + C_B \varepsilon \Vert \nabla \textbf{u}_m \Vert^2 +\frac{C_B \Vert \textbf{u}_m \Vert^2}{4\varepsilon} \\
       &=\frac{\Vert \textbf{f} \Vert^2_{\textbf{V}'}}{4\varepsilon} +    (1+C_B) \varepsilon \Vert \nabla \textbf{u}_m \Vert^2 +\left(\varepsilon+\frac{C_B}{4\varepsilon}\right) \Vert \textbf{u}_m \Vert^2
\end{align*}
where we used Cauchy's inequality with $\varepsilon$. Choosing $\varepsilon$ so that that $(1+C_B) \varepsilon=\alpha/2$, rearranging and multiplying by $2$ we obtain
\begin{align} \label{Inequality1}
    \frac{d}{dt} \Vert \textbf{u}_m (t) \Vert^2 + \alpha \Vert \nabla \textbf{u}_m \Vert^2 \leq K \left(\Vert \textbf{f} \Vert_{\textbf{V}'}^2 +  \Vert \textbf{u}_m (t) \Vert^2 \right)
\end{align}
where the constant $K>0$ depends only on $C_B$ and $\alpha$.

In particular, we have 
\begin{align*}
     \frac{d}{dt} \Vert \textbf{u}_m (t) \Vert^2  \leq K \left(\Vert \textbf{f} \Vert_{\textbf{V}'}^2 +  \Vert \textbf{u}_m (t) \Vert^2 \right)
\end{align*}
and thus applying Gronwall's inequality we obtain
\begin{align*}
     \Vert \textbf{u}_m (t) \Vert^2 &\leq e^{Kt} \left( \Vert \textbf{u}_m (0) \Vert^2 + K \int_0^t \Vert \textbf{f} (s) \Vert_{\textbf{V}'}^2 \, ds  \right)  
     \\ & \leq e^{Kt} \left( K+1 \right)  \left( \Vert \textbf{u}_m (0) \Vert^2 + \int_0^t \Vert \textbf{f} (s) \Vert_{\textbf{V}'}^2 \, ds  \right)  .   
\end{align*}
Since
\begin{equation*}
    \Vert \textbf{u}_m (0) \Vert^2 \leq \Vert \textbf{u}_0 \Vert^2
\end{equation*}
we have
\begin{equation} \label{maxum}
    \sup_{0\leq t \leq T}  \Vert \textbf{u}_m (t) \Vert^2  \leq e^{KT} \left( K+1 \right)  \left( \Vert \textbf{u}_0 \Vert^2 + \int_0^t \Vert \textbf{f} (s) \Vert_{\textbf{V}'}^2 \, ds  \right) \coloneqq C_1(T)
\end{equation}
where in particular $C_1(T)$ is independent of $m$. Next, returning to \eqref{Inequality1} and integrating with respect to $t$ we obtain
\begin{Small}\begin{align} \label{C2} \begin{split}
    \int_0^T \Vert \nabla \textbf{u}_m(t) \Vert^2 \, dt &\leq \frac{1}{\alpha} \left(   \Vert \textbf{u}_m (0) \Vert^2-  \Vert \textbf{u}_m (T) \Vert^2 +K \int_0^T \Vert \textbf{f}(t) \Vert_{\textbf{V}'}^2 \, dt  +  K \int_0^T  \Vert \textbf{u}_m (t) \Vert^2 \, dt  \right) \\
    &\leq  \frac{1}{\alpha} \left( 2C_1(T) +  KT C_1(T)  + K\int_{0}^T \Vert \textbf{f}(t) \Vert_{\textbf{V}'}^2 \, dt  \ \right) \coloneqq C_2(T) \end{split}
\end{align}
\end{Small}
and so we can estimate
\begin{equation} \label{Boundum}
    \Vert \textbf{u}_m \Vert^2_{L(0, T; \textbf{V})} \leq TC_1(T)+ C_2(T)
\end{equation}
where the right hand side is independent of $m$.

As a result of \eqref{maxum} and \eqref{Boundum} there exists a function
\begin{equation*}
    \textbf{u} \in L^2(0, T; V) \cap L^\infty (0, T; H) 
\end{equation*}
and a subsequence $m'$ such that
\begin{align} \label{weakConvergence}
    \left. \begin{array}{l}
    \textbf{u}_{m'} \rightarrow \textbf{u} \quad \text{in} \quad L^2(0, T; V) \quad \text{weakly, and}\\ 
    L^\infty (0, T; H) \quad \text{weak-star, as} \quad m'\rightarrow \infty
    \end{array}    \right\rbrace.
\end{align}
Moreover, it can be shown that in fact
\begin{equation} \label{StrongConvergenceInH}
    \textbf{u}_{m'} \rightarrow \textbf{u} \quad \text{in} \quad L^2(0, T; H) \quad \text{strongly}.
\end{equation}
The proof is the same as the proof of (3.3.41) in \cite{temam}.

Next, we need to show that $\textbf{u}$ in fact is a solution of \eqref{weakForm}. To do so, we let $\psi$ be a continuously differntiable function on $[0, T]$ with $\psi(T)=0$. We multiply \eqref{WeakForm2} by $\psi$ and integrate by parts to obtain 
\begin{align*}
    -\int_0^T ( \textbf{u}_m (t), \psi'(t) \textbf{w}_k) \, dt &+ 
\int_0^T A[\textbf{u}_m(t), \psi(t) \textbf{w}_k] \, dt + \int_0^T B[\textbf{u}_m(t), \textbf{u}_m(t), \psi(t) \textbf{w}_k] \, dt \\ \quad &= (\textbf{u}_{m0}, \textbf{w}_k)\psi(0) + \int_0^T \langle \textbf{f}(t),  \psi(t) \textbf{w}_k \rangle \, dt.
\end{align*}
Passing to the limit for the subsequence $m'$ is trivial for the linear terms by definition of weak convergence. We  have to deal with the nonlinear term. This can be done with the use of the following lemma, whose proof is elementary.

\begin{lemma} \label{WeakBanachConvergence}
Let $X$ be a Banach space, $\lbrace x_m \rbrace \subset X$ a sequence of elements such that $x_m \rightarrow x$ weakly, and $\lbrace l_k \rbrace \subset X^*$ a sequence of linear functionals such that $l_k \rightarrow l$ in the operator norm. Then $l_k(x_k) \rightarrow l(x)$.
\end{lemma}

With this lemma in hand, define $\Omega_T=\Omega \times [0, T]$. Let $\textbf{v}=\psi(t)\textbf{w}_k$. Let
\begin{equation} \label{vConstant}
    C_v = \sup_{(x, t)\in \Omega_T} |\psi(t) \textbf{w}_k(x)|
\end{equation}
which exists since $\psi(t) \textbf{w}_k(x) \in C^1(\overbar{\Omega}_T)$ by the smoothness of the components of $\textbf{w}_k$. For such $\textbf{v}$ we have
\begin{equation*}
    \int_0^T B[\textbf{u}_{m'}, \cdot, \textbf{v}] \, dt \in ( L^2(0, T; V))^*
\end{equation*}
where using \eqref{BBound} and \eqref{vConstant} we have
\begin{equation*}
    \left\vert \int_0^T B[\textbf{u}_{m'}, \cdot, \textbf{v}] \, dt \right\vert \leq C_B C_v \sqrt{|\Omega| T} \Vert \cdot \Vert_{L^2(0, T; V)}
\end{equation*}
where $|\Omega|$ is the volume of $\Omega$.

Now, we want to show that for $\textbf{v}=\psi(t) \textbf{w}_k$ 
\begin{equation} \label{OperatorConvergence}
    \int_0^T B[\textbf{u}_{m'}, \cdot, \textbf{v}] \, dt \rightarrow \int_0^T B[\textbf{u}, \cdot, \textbf{v}] \, dt
\end{equation}
in the operator norm. 

The easiest way to do this is as follows. Suppose that
\begin{equation*}
    \textbf{y} \in L^2(0, T; H)
\end{equation*}
then we have
\begin{equation*}
    \Vert \textbf{y} \Vert_{L^2(0, T; H)} = \left( \int_0^T \Vert \textbf{y} \Vert_{\textbf{L}^2} \, dt    \right)^{1/2} = \left( \int_0^T \int_\Omega |\textbf{y}|^2 \, dx \, dt    \right)^{1/2} = \Vert \textbf{y} \Vert_{\textbf{L}^2(\Omega_T)}
\end{equation*}
and thus by \eqref{StrongConvergenceInH} we have
\begin{equation*}
    \textbf{u}_{m'} \rightarrow \textbf{u} \quad \text{strongly in} \quad \textbf{L}^2(\Omega_T). 
\end{equation*}
It is then well known that (passing to a subsequence if necessary which we continue to denote by $m'$)
\begin{equation} \label{ConvergenceAE}
    \textbf{u}_{m'}(x, t) \rightarrow \textbf{u}(x, t) \quad \text{almost everywhere in} \; \Omega_T. 
\end{equation}
Now, take an arbitrary $\textbf{y} \in L^2(0, T; V)$ and consider
\begin{Small}
\begin{align*}
     &\left\vert \int_0^T B[\textbf{u}_{m'}, \textbf{y}, \textbf{v}] \, dt - \int_0^T B[\textbf{u},  \textbf{y}, \textbf{v}] \, dt \right\vert \leq \sum_{i=1}^3 \sum_{j=1}^3 \left\vert \int_0^T  \int_{\Omega} \left( \frac{c (u_{m'})_j (\partial_j y_i) v_i  }   { \sqrt{c^2 + |\textbf{u}_{m'}|^2}    } - \frac{c u_j (\partial_j y_i) v_i  }   { \sqrt{c^2 + |\textbf{u}|^2}    } \right) \, dx \, dt \right\vert \\
     &\hspace{4cm}\leq \sum_{i=1}^3 \sum_{j=1}^3 C_v  \int_0^T  \int_{\Omega} \left\vert \frac{c (u_{m'})_j   }   { \sqrt{c^2 + |\textbf{u}_{m'}|^2}    } - \frac{c u_j   }   { \sqrt{c^2 + |\textbf{u}|^2}    }  \right\vert |\partial_j y_i| \, dx \, dt \\ &\hspace{4cm} \leq 3C_v \left( \sum_{j=1}^3 \int_T \int_\Omega  \left\vert \frac{c (u_{m'})_j   }   { \sqrt{c^2 + |\textbf{u}_{m'}|^2}    } - \frac{c u_j   }   { \sqrt{c^2 + |\textbf{u}|^2}    }  \right\vert^2  \, dx \, dt \right)^{1/2} \Vert \textbf{y} \Vert_{L^2(0, T; V)}
\end{align*}
\end{Small}
as can be easily checked by applying H\"{o}lder's inequality. Now we can estimate
\begin{equation*}
    \left\vert \frac{c (u_{m'})_j   }   { \sqrt{c^2 + |\textbf{u}_{m'}|^2}    } - \frac{c u_j   }   { \sqrt{c^2 + |\textbf{u}|^2}    }  \right\vert^2 \leq 4c^2
\end{equation*}
and so by \eqref{ConvergenceAE} and the dominated convergence theorem we obtain
\begin{equation*}
    \lim_{m' \rightarrow \infty} \int_T \int_\Omega  \left\vert \frac{c (u_{m'})_j   }   { \sqrt{c^2 + |\textbf{u}_{m'}|^2}    } - \frac{c u_j   }   { \sqrt{c^2 + |\textbf{u}|^2}    }  \right\vert^2  \, dx \, dt = 0
\end{equation*}
and so \eqref{OperatorConvergence} does indeed hold. As a result, by Lemma \ref{WeakBanachConvergence} we obtain
\begin{equation} \label{TermConvergence}
    \lim_{m' \rightarrow \infty }\int_0^T B[\textbf{u}_{m'}, \textbf{u}_{m'}, \textbf{v}] \, dt \rightarrow \int_0^T B[\textbf{u}, \textbf{u}, \textbf{v}] \, dt
\end{equation}
for any $\textbf{v}$ of the form $\textbf{v}=\psi(t) \textbf{w}_k$. As a result, \eqref{TermConvergence} holds for any linear combination of such functions, and by a continuity argument it holds for any $\textbf{v}\in L^2(0, T; V)$. Hence in the limit we obtain
\begin{align} \label{EquationInLimit} \begin{split}
    -\int_0^T ( \textbf{u} (t), \psi'(t) \textbf{v}) \, dt &+ 
\int_0^T A[\textbf{u}(t), \psi(t) \textbf{v}] \, dt + \int_0^T B[\textbf{u}(t), \textbf{u}(t), \psi(t) \textbf{v}] \, dt \\ \quad &= (\textbf{u}_{0}, \textbf{v})\psi(0) + \int_0^T \langle \textbf{f}(t),  \psi(t) \textbf{v} \rangle \, dt \end{split}
\end{align}
for any $v\in V$. Thus \eqref{weakForm} holds in the weak sense.

The last thing which we need to verify is that $\textbf{u}(0)=\textbf{u}_0$. Taking \eqref{weakForm}, multiplying by $\psi(t)$ and integrating, we obtain
\begin{align*} \begin{split}
    -\int_0^T ( \textbf{u} (t), \psi'(t) \textbf{v}) \, dt &+ 
\int_0^T A[\textbf{u}(t), \psi(t) \textbf{v}] \, dt + \int_0^T B[\textbf{u}(t), \textbf{u}(t), \psi(t) \textbf{v}] \, dt \\ \quad &= (\textbf{u}(0), \textbf{v})\psi(0) + \int_0^T \langle \textbf{f}(t),  \psi(t) \textbf{v} \rangle \, dt \end{split}
\end{align*}
which upon comparing with \eqref{EquationInLimit} we obtain
\begin{equation*}
    (\textbf{u}(0)-\textbf{u}_0, \textbf{v})\psi(0)=0
\end{equation*}
and by choosing $\psi(t)$ with $\psi(0)=1$ we get 
\begin{equation*}
    (\textbf{u}(0)-\textbf{u}_0, \textbf{v})=0, \quad \forall \quad  \textbf{v}\in V
\end{equation*}
from which \eqref{weakFormU} follows.

Using the usual identifications we can write
\begin{equation*}
    (\textbf{u}, \textbf{v}) = \langle \textbf{u}, \textbf{v} \rangle
\end{equation*}
and thus by Lemma 3.1.1 in \cite{temam} we can write \eqref{weakForm} as 
\begin{equation*}
    \frac{d}{dt}(\textbf{u}, \textbf{v}) = \frac{d}{dt}\langle \textbf{u}, \textbf{v} \rangle=\langle \textbf{u}', \textbf{v} \rangle = \langle \textbf{f} - A(\textbf{u})-B(\textbf{u}), \textbf{v} \rangle  
\end{equation*}
so
\begin{equation*}
    \textbf{u}'=\textbf{f} - A(\textbf{u})-B(\textbf{u})
\end{equation*}
for almost every $t$. By \eqref{BBound}, \eqref{ABound}, and \eqref{Boundum} we see that
\begin{equation*}
    \textbf{u}' \in L^2(0, T; \textbf{V}')
\end{equation*}
with a bound of the form 
\begin{equation}
    \Vert \textbf{u}' \Vert_{L^2(0, T; \textbf{V}')} \leq C\left( \Vert \textbf{u} \Vert_{ L^2(0, T, \textbf{V}     } + \Vert \textbf{f} \Vert_{ L^2(0, T, \textbf{V}'     }  \right)
\end{equation}
for some constant $C$. Therefore, by Theorem 3 in Section 5.9.2 in \cite{evans} we have  $\textbf{u}\in C([0, T]; H)$.
\end{proof}

Now that we have obtained that a weak solution exists, we would like to obtain better regularity for the solution.

\subsection{Improved Regularity} We have the following proposition which corresponds to Theorem 5 in \cite{evans}.

\begin{proposition} \label{HigherRegularity}
\noindent Assume that
    \begin{equation*}
        \textbf{u}_0 \in \textbf{V}, \quad \textbf{f} \in L^2(0, T; \textbf{L}^2).
    \end{equation*}
Suppose also $\textbf{u}\in L^2(0, T; \textbf{V})$, with $\textbf{u}' \in L^2(0, T, \textbf{V}\,')$ is the weak solution of problem \eqref{QRNSMOmega}. Then in fact
\begin{equation*}
    \textbf{u} \in L^2(0, T; \textbf{H}^2) \cap L^{\infty}(0, T; V), \quad \textbf{u}' \in L^2(0, T; \textbf{L}^2) 
    \end{equation*}
    and we have the estimate
    \begin{equation} \label{EstimateSupV}
        \esssup_{0\leq t\leq T} \Vert \textbf{u}(t) \Vert_V + \Vert \textbf{u} \Vert_{L^2(0, T; \textbf{H}^2)} + \Vert \textbf{u}' \Vert_{L^2(0, T; \textbf{L}^2)} \leq C\left(\Vert \textbf{f} \, \Vert_{L^2(0, T; \textbf{L}^2)} + \Vert \textbf{u}_0 \Vert_V \right)
\end{equation}
where the constant $C$ depends only on $c, \alpha, T$ and $\Omega$.

\end{proposition}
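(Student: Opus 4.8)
The plan is to establish the three higher-order bounds at the level of a Galerkin approximation and then pass to the limit, exactly as in Theorem~5 of \cite{evans} and Chapter~3 of \cite{temam}, the essential mechanism being that the uniform bound \eqref{BBound} on the modified convection term turns every nonlinear contribution into one that is \emph{linear} in the highest derivative present, so that it can be absorbed into the viscous term with no smallness assumption. Concretely I would work with the special basis $\{\textbf{w}_k\}$ of eigenfunctions of the Stokes operator $\mathcal{A}$, normalized to be orthonormal in $H$, so that $\mathcal{A}\textbf{w}_k=\lambda_k\textbf{w}_k$, $(\nabla\textbf{w}_i,\nabla\textbf{w}_j)=\lambda_i\delta_{ij}$, and the $\textbf{L}^2$-orthogonal projection $P_m$ onto $\mathrm{span}\{\textbf{w}_1,\dots,\textbf{w}_m\}$ is stable in $\textbf{V}$. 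These eigenfunctions are smooth and lie in $\textbf{V}\cap\textbf{H}^2$, which suffices for every integration by parts below; the basic estimate \eqref{Inequality1}--\eqref{Boundum} of Proposition~\ref{PropositionWeakExistence} is insensitive to this change of basis and supplies the starting bounds $\textbf{u}_m\in L^\infty(0,T;H)\cap L^2(0,T;\textbf{V})$. Since $\textbf{u}_{m0}=P_m\textbf{u}_0$, stability of $P_m$ gives $\|\nabla\textbf{u}_{m0}\|\le\|\nabla\textbf{u}_0\|\le\|\textbf{u}_0\|_V$.

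The key estimate comes from multiplying the $k$-th Galerkin equation \eqref{WeakForm2} by $\lambda_k g_{km}(t)$ and summing over $k$, i.e.\ testing against $\mathcal{A}\textbf{u}_m$. Using $(\textbf{u}_m',\mathcal{A}\textbf{u}_m)=\tfrac12\tfrac{d}{dt}\|\nabla\textbf{u}_m\|^2$ and $A[\textbf{u}_m,\mathcal{A}\textbf{u}_m]=\alpha\|\mathcal{A}\textbf{u}_m\|^2$ yields
\[
\tfrac12\tfrac{d}{dt}\|\nabla\textbf{u}_m\|^2+\alpha\|\mathcal{A}\textbf{u}_m\|^2=(\textbf{f},\mathcal{A}\textbf{u}_m)-B[\textbf{u}_m,\textbf{u}_m,\mathcal{A}\textbf{u}_m].
\]
I would bound the forcing term by Cauchy's inequality with $\varepsilon$ and, crucially, estimate the nonlinear term through \eqref{BBound} as $|B[\textbf{u}_m,\textbf{u}_m,\mathcal{A}\textbf{u}_m]|\le C_B\|\nabla\textbf{u}_m\|\,\|\mathcal{A}\textbf{u}_m\|$, again applying Cauchy with $\varepsilon$. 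Choosing $\varepsilon$ so the two $\|\mathcal{A}\textbf{u}_m\|^2$ contributions are absorbed by $\alpha\|\mathcal{A}\textbf{u}_m\|^2$ produces a differential inequality of the form $\tfrac{d}{dt}\|\nabla\textbf{u}_m\|^2+\alpha\|\mathcal{A}\textbf{u}_m\|^2\le C\big(\|\textbf{f}\|^2+\|\nabla\textbf{u}_m\|^2\big)$. Gronwall then bounds $\sup_t\|\nabla\textbf{u}_m(t)\|^2$ by $C(\|\textbf{u}_0\|_V^2+\|\textbf{f}\|_{L^2(0,T;\textbf{L}^2)}^2)$, and integrating in time bounds $\int_0^T\|\mathcal{A}\textbf{u}_m\|^2\,dt$ by the same data; Stokes elliptic regularity $\|\textbf{u}_m\|_{\textbf{H}^2}\le C(\|\mathcal{A}\textbf{u}_m\|+\|\textbf{u}_m\|)$ on the smooth domain (and trivially on $\mathbb{T}$) then gives a uniform $L^2(0,T;\textbf{H}^2)$ bound. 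Finally, reading the Galerkin identity as $\textbf{u}_m'=P_m\textbf{f}-\alpha\mathcal{A}\textbf{u}_m-P_mB(\textbf{u}_m)$ and using $\|P_mB(\textbf{u}_m)\|_{\textbf{L}^2}\le c\|\nabla\textbf{u}_m\|$ (a direct consequence of $|\textbf{v}|\le c$) delivers a uniform bound on $\textbf{u}_m'$ in $L^2(0,T;\textbf{L}^2)$.

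With these three $m$-independent bounds I would extract a subsequence converging weakly in $L^2(0,T;\textbf{H}^2)$, weak-$*$ in $L^\infty(0,T;\textbf{V})$, and weakly in $L^2(0,T;\textbf{L}^2)$ for the time derivative; weak lower semicontinuity of the norms reproduces exactly the estimate \eqref{EstimateSupV}, with $C$ depending only on $\alpha$, $C_B$ (hence $c$), $T$ and $\Omega$. The limit is again a weak solution, so by the uniqueness of Proposition~\ref{PropositionUniqueness} it coincides with the given $\textbf{u}$, which therefore has the asserted regularity; the continuity $\textbf{u}\in C([0,T];\textbf{V})$ (if wanted) follows from $\textbf{u}\in L^2(\textbf{H}^2)$, $\textbf{u}'\in L^2(\textbf{L}^2)$ by the usual interpolation lemma.

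I expect the only genuinely delicate point to be the treatment of the nonlinear term at the $\textbf{H}^2$ level: for the ordinary Navier--Stokes equations the term $B[\textbf{u},\textbf{u},\mathcal{A}\textbf{u}]$ is supercritical in three dimensions and cannot be controlled linearly, which is precisely why global strong existence is open without a smallness hypothesis. The whole point of the model is that replacing $\textbf{u}$ by $\textbf{v}=c\textbf{u}/\sqrt{c^2+|\textbf{u}|^2}$ makes the transport coefficient uniformly bounded, so that \eqref{BBound} holds with a fixed constant and the nonlinearity becomes strictly subordinate to the dissipation. Once this is recognized, the remaining issues (selecting a basis for which $\mathcal{A}\textbf{u}_m$ is admissible, invoking Stokes regularity, and justifying the limit together with its identification with $\textbf{u}$) are entirely routine.
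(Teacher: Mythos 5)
Your proposal is correct, but it follows a genuinely different route from the paper's. The paper multiplies the Galerkin equations by $g_{km}'(t)$, i.e.\ tests against $\textbf{u}_m'$: the identity $A[\textbf{u}_m,\textbf{u}_m']=\tfrac{d}{dt}\bigl(\tfrac12 A[\textbf{u}_m,\textbf{u}_m]\bigr)$ together with \eqref{BBound} then yields the uniform bounds on $\esssup_t\Vert\nabla\textbf{u}_m\Vert$ and $\int_0^T\Vert\textbf{u}_m'\Vert_{\textbf{L}^2}^2\,dt$ directly, with no change of basis, and the $\textbf{H}^2$ regularity is recovered only afterwards, at the level of the limit $\textbf{u}$, by freezing $t$ and reading the equation as a stationary Stokes problem $A[\textbf{u},\textbf{v}]=(\textbf{h},\textbf{v})$ with $\textbf{h}=\textbf{f}-\textbf{u}'-(\textbf{v}\cdot\nabla)\textbf{u}\in\textbf{L}^2$. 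You instead test against $\mathcal{A}\textbf{u}_m$, which forces you to replace the paper's arbitrary $\textbf{V}$-orthonormal basis drawn from $\mathcal{V}$ by the Stokes eigenfunctions (so that $\mathcal{A}\textbf{u}_m$ remains an admissible test function and $P_m$ is stable on $\textbf{V}$); you correctly flag this and note that the estimates \eqref{Inequality1}--\eqref{Boundum} survive the change. Your route delivers the $L^2(0,T;\textbf{H}^2)$ bound already at the approximation level and extracts the $\textbf{u}_m'$ bound from the equation, whereas the paper gets $\textbf{u}'$ first and $\textbf{H}^2$ last; both ultimately rest on the same two ingredients, namely the uniform bound \eqref{BBound} that makes the nonlinearity subordinate to the dissipation, and Cattabriga-type regularity for the Stokes system on $U$ (trivial on $\mathbb{T}$). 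One structural difference worth noting: since the statement assumes $\textbf{u}$ is \emph{the} weak solution, you must identify your newly built strong Galerkin limit with the given $\textbf{u}$; your appeal to the uniqueness argument is legitimate and not circular, because that argument only requires one of the two solutions to lie in $L^\infty(0,T;\textbf{V})$, which your limit does --- the paper sidesteps this by implicitly taking $\textbf{u}$ to be the limit of the same Galerkin sequence.
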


\begin{proof}
If $\textbf{f}\in \textbf{L}^2$ then using the usual identifications we can write $\langle \textbf{f}, \textbf{v} \rangle =(\textbf{f}, \textbf{v})$. We fix $m\geq 1$, we multiply equation \eqref{WeakForm2} by $g_{km}'(t)$ and sum over $k$ to obtain 
\begin{equation*}
    \left(  \textbf{u}_m' ,  \textbf{u}_m'  \right) + A[\textbf{u}_m, \textbf{u}_m'] + B[\textbf{u}_m, \textbf{u}_m, \textbf{u}_m'] = (\textbf{f}, \textbf{u}_m' ).
\end{equation*}
We see that 
\begin{equation*}
    A[\textbf{u}_m, \textbf{u}_m'] = \frac{d}{dt} \left( \frac{1}{2} A[\textbf{u}_m, \textbf{u}_m]   \right) = \frac{d}{dt} \left( \frac{\alpha}{2} \int_{\Omega} |\nabla \textbf{u}_m |^2 \, dx      \right)
\end{equation*}
and 
\begin{equation*}
    |B[\textbf{u}_m, \textbf{u}_m, \textbf{u}_m']| \leq C_B \Vert \nabla \textbf{u}_m \Vert_{\textbf{L}^2} \Vert \textbf{u}_m' \Vert_{\textbf{L}^2} \leq C_B \Vert \textbf{u}_m \Vert_{\textbf{V}} \Vert \textbf{u} \Vert_{\textbf{L}^2}
\end{equation*}
and therefore
\begin{equation*}
    |B[\textbf{u}_m, \textbf{u}_m, \textbf{u}_m']| \leq \frac{C_B^2}{4\epsilon} \Vert \textbf{u}_m \Vert_{\textbf{V}}^2 + \epsilon \Vert \textbf{u}_m' \Vert_{\textbf{L}^2}^2, \quad |(\textbf{f}, \textbf{u}_m')| \leq \frac{1}{4\epsilon}\Vert \textbf{f} \Vert_{\textbf{L}^2}^2 + \epsilon \Vert \textbf{u}_m' \Vert_{\textbf{L}^2}^2
\end{equation*}
for any $\epsilon>0$. Thus we obtain
\begin{equation*}
    \Vert \textbf{u}_m' \Vert_{\textbf{L}^2}^2 + \frac{d}{dt} \left( \frac{1}{2} A[\textbf{u}_m, \textbf{u}_m] \right)  
     \leq \frac{C_B^2}{4\epsilon} \Vert \textbf{u}_m \Vert_{\textbf{V}}^2 + \frac{1}{4\epsilon} \Vert \textbf{f} \Vert_{\textbf{L}^2}^2 + 2\epsilon \Vert \textbf{u}_m' \Vert_{\textbf{L}^2}^2 
\end{equation*}
and upon letting $\epsilon=1/4$, subtracting and multiplying by $2$ we obtain
\begin{equation*}
     \Vert \textbf{u}_m' \Vert_{\textbf{L}^2}^2 + \frac{d}{dt} \left( A[\textbf{u}_m, \textbf{u}_m] \right)  
     \leq 2C_B^2 \Vert \textbf{u}_m \Vert_{\textbf{V}}^2 + 2 \Vert \textbf{f} \Vert_{\textbf{L}^2}^2. 
\end{equation*}
Therefore, integrating we obtain
\begin{Small}
\begin{align*}
    \int_0^T \Vert \textbf{u}_m'(t) \Vert_{\textbf{L}^2}^2 \, dt &+ \sup_{0\leq t \leq T} A[\textbf{u}_m(t), \textbf{u}_m(t)] \leq A[\textbf{u}_m(0), \textbf{u}_m(0)] +  \int_0^T 2C_B^2 \Vert \textbf{u}_m(t) \Vert_{\textbf{V}}^2 \,  dt +  \int_0^T 2 \Vert \textbf{f}(t) \Vert_{\textbf{L}^2}^2 \, dt \\
    &\leq \alpha \Vert \textbf{u}_m(0) \Vert_{\textbf{V}}^2 + 2C_B^2 [TC_1(T)+C_2(T)] + 2 \int_0^T \Vert \textbf{f} (t) \Vert_{\textbf{L}^2}^2 \, dt \\
    &\leq \alpha \Vert \textbf{u}_0 \Vert_{\textbf{V}}^2 + 2C_B^2 [TC_1(T)+C_2(T)] + 2 \int_0^T \Vert \textbf{f} (t) \Vert_{\textbf{L}^2}^2 \, dt \\
    &\leq C_3 \left(  \Vert \textbf{u}_0 \Vert_{\textbf{V}}^2 + \Vert \textbf{f} \Vert_{ L^2(0, T; \textbf{L}^2)    }^2  \right)
\end{align*}
\end{Small}
for some constant $C_3$ depending only on $T$ and $\alpha$ by the way we defined $C_1(T)$ and $C_2(T)$ in \eqref{maxum} and \eqref{C2}.

Therefore, since 
\begin{equation*}
    \sup_{0\leq t \leq T} \Vert \nabla \textbf{u}_m \Vert_{\textbf{L}^2}^2 \leq \frac{C_3}{\alpha} \left(  \Vert \textbf{u}_0 \Vert_{\textbf{V}}^2 + \Vert \textbf{f} \Vert_{ L^2(0, T; \textbf{L}^2)    }^2  \right)
\end{equation*}
and so combining with \eqref{maxum} we obtain
\begin{equation*}
     \sup_{0\leq t \leq T} \Vert \textbf{u}_m \Vert_{{\textbf{V}}}^2 \leq C_4 \left(  \Vert \textbf{u}_0 \Vert_{\textbf{V}}^2 + \Vert \textbf{f} \Vert_{ L^2(0, T; \textbf{L}^2)    }^2  \right)
\end{equation*}
for some constant $C_4$ depending only on $T$ and $\alpha$. Passing to limits, we find that $\textbf{u}\in L^{\infty}(0, T; {\textbf{V}})$ amd $\textbf{u}' \in L^2(0, T; \textbf{L}^2)$ with the stated bounds. 

As a result, for a.e. $0\leq t \leq T$ we have the identity
\begin{equation*}
    ( \textbf{u}', \textbf{v} ) + A[\textbf{u}, \textbf{v}] + B[\textbf{u}, \textbf{u}, \textbf{v}]=(\textbf{f}, \textbf{v})
\end{equation*}
for each $\textbf{v}\in {\textbf{V}}$. Defining
\begin{equation*}
    \textbf{h}\coloneqq \textbf{f}-\textbf{u}'- \left[\left(\frac{c\textbf{u}}{ \sqrt{c^2 + |\textbf{u}|^2}    }\cdot \nabla\right)\textbf{u} \right] \in \textbf{L}^2
\end{equation*}
we have that $\textbf{u}$ satisfies the elliptic problem
\begin{equation}
    A[\textbf{u}, \textbf{v}]=(\textbf{h}, \textbf{v})
\end{equation}
for a.e. $0\leq t\leq T$. Therefore, by elliptic regularity, there exists some constant $C_{\Omega}$ depending only on the geometry of $\Omega$ such that
\begin{align*}
    \Vert \textbf{u} \Vert_{\textbf{H}^2}^2 &\leq C_{\Omega} \left(  \Vert \textbf{h} \Vert_{\textbf{L}^2}^2 + \Vert \textbf{u} \Vert_{\textbf{L}^2}^2\right) \\
    &\leq 4C_{\Omega} \left(  \Vert \textbf{f} \Vert_{\textbf{L}^2}^2 + \Vert \textbf{u}' \Vert_{\textbf{L}^2}^2 + c^2 \Vert \textbf{u} \Vert_{{\textbf{V}}}^2 \right).
\end{align*}
Integrating and applying our previous estimates we find that \eqref{EstimateSupV} holds for some constant $C$.
\end{proof}

The above estimates allow us to conclude:
\begin{cor}
The solution of Proposition \ref{HigherRegularity} satisfies
\begin{equation*}
    \textbf{u}\in C([0, T]; {\textbf{V}})
\end{equation*}
after potentially being redefined on a set of measure $0$.
\end{cor}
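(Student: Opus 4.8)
The plan is to upgrade the weak-star bound $\textbf{u}\in L^\infty(0,T;\textbf{V})$ already established in Proposition \ref{HigherRegularity} to genuine continuity by realizing $\textbf{u}$ as an element of an abstract parabolic interpolation space. First I would record two facts that are essentially contained in the previous proof: that $\textbf{u}\in L^2(0,T;\textbf{H}^2\cap\textbf{V})$, and that in fact $\textbf{u}'\in L^2(0,T;H)$ rather than merely $L^2(0,T;\textbf{L}^2)$. The latter holds because for a.e.\ $t$ the difference quotients $h^{-1}(\textbf{u}(t+h)-\textbf{u}(t))$ are weakly divergence free and converge to $\textbf{u}'(t)$ in $\textbf{L}^2$; since $H$ is a closed subspace of $\textbf{L}^2$, the limit $\textbf{u}'$ is again divergence free and so lies in $L^2(0,T;H)$.

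Next I would introduce the Stokes operator $A=-P\Delta$, where $P$ is the orthogonal (Leray) projection of $\textbf{L}^2$ onto $H$. On a smooth bounded domain $A$ is, by the elliptic regularity already used above, a densely defined self-adjoint positive operator on $H$ with compact inverse and domain $D(A)=\textbf{H}^2\cap\textbf{V}$; on $\mathbb{T}$ these properties are immediate from the Fourier series representation. The purpose of $A$ is the standard identification of its fractional power domains, namely $D(A^{1/2})=\textbf{V}$ with $\Vert A^{1/2}\textbf{u}\Vert$ equivalent to $\Vert\nabla\textbf{u}\Vert$ and hence to $\Vert\textbf{u}\Vert_{\textbf{V}}$. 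In this language the two facts above read $\textbf{u}\in L^2(0,T;D(A))$ and $\textbf{u}'\in L^2(0,T;H)$.

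With this setup the conclusion follows from the Lions--Magenes interpolation theorem: if $\textbf{u}\in L^2(0,T;D(A))$ and $\textbf{u}'\in L^2(0,T;H)$, then $\textbf{u}$ agrees almost everywhere with a function in $C([0,T];[D(A),H]_{1/2})$, and the interpolation space halfway between $D(A)$ and $H$ is exactly $D(A^{1/2})=\textbf{V}$. This is the higher-regularity analogue of the result (Theorem 3 in Section 5.9.2 of \cite{evans}, and Lemma 3.1.1 in \cite{temam}) already invoked to obtain $\textbf{u}\in C([0,T];H)$. A self-contained alternative, which I would sketch, expands $\textbf{u}(t)=\sum_k d_k(t)\textbf{e}_k$ in the $H$-orthonormal eigenbasis of $A$ and shows that $t\mapsto\Vert A^{1/2}\textbf{u}(t)\Vert^2=\sum_k\lambda_k d_k(t)^2$ is absolutely continuous with $\frac{d}{dt}\Vert A^{1/2}\textbf{u}\Vert^2=2(A\textbf{u},\textbf{u}')\in L^1(0,T)$ by Cauchy--Schwarz; since in a Hilbert space weak continuity together with continuity of the norm gives strong continuity, combining this with the weak continuity of $t\mapsto\textbf{u}(t)$ into $\textbf{V}$ (which follows from $\textbf{u}\in C([0,T];H)\cap L^\infty(0,T;\textbf{V})$) yields the claim.

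The step I expect to be the main obstacle is the identification $[D(A),H]_{1/2}=D(A^{1/2})=\textbf{V}$ with equivalent norms, valid uniformly for $\Omega$ and $\mathbb{T}$. On the torus this is transparent from Fourier series, while on a bounded smooth domain it rests on the self-adjointness and elliptic regularity of the Stokes operator; once it is in hand the remainder is a direct application of the abstract embedding. Finally, the redefinition on a null set asserted in the statement is precisely the passage from the a.e.-defined $L^\infty(0,T;\textbf{V})$ representative to the continuous representative supplied by the theorem.
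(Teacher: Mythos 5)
Your argument is correct, but it takes a genuinely different and considerably heavier route than the paper, which disposes of the corollary in one line by citing Theorem 4 in Section 5.9.2 of \cite{evans}: that theorem states directly that $\textbf{u}\in L^2(0,T;\textbf{H}^2)$ together with $\textbf{u}'\in L^2(0,T;\textbf{L}^2)$ forces $\textbf{u}\in C([0,T];\textbf{H}^1)$ after redefinition on a null set, and since $\textbf{V}$ is closed in $\textbf{H}^1_0$ and contains $\textbf{u}(t)$ for a.e.\ $t$, continuity into $\textbf{V}$ follows. You instead reconstruct the abstract version of that embedding: you introduce the Stokes operator, identify $D(A)=\textbf{H}^2\cap\textbf{V}$ and $D(A^{1/2})=\textbf{V}$, and invoke the Lions--Magenes intermediate-derivatives theorem, with a spectral argument as a self-contained backup. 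What your route buys is extra information the citation does not give, namely the absolute continuity of $t\mapsto\Vert\nabla\textbf{u}(t)\Vert^2$ and the identity $\frac{d}{dt}\Vert A^{1/2}\textbf{u}\Vert^2=2(A\textbf{u},\textbf{u}')$, which are useful elsewhere; what it costs is the nontrivial input $D(A)=\textbf{H}^2\cap\textbf{V}$ on a bounded domain (Cattabriga--Agmon--Douglis--Nirenberg regularity for the Stokes system), which the paper's componentwise scalar argument never needs. Two points to tighten: on $\mathbb{T}$ the Stokes operator annihilates constants, so $\Vert A^{1/2}\cdot\Vert$ controls $\Vert\nabla\cdot\Vert$ but not the full $\textbf{V}$-norm, and you should either replace $A$ by $I+A$ or explicitly combine the continuity of $t\mapsto\Vert A^{1/2}\textbf{u}(t)\Vert$ with the already established $\textbf{u}\in C([0,T];H)$, as your closing sentence implicitly does; and the weak continuity of $t\mapsto\textbf{u}(t)$ into $\textbf{V}$ should be attributed to the standard lemma that a function in $L^\infty(0,T;X)$ which is weakly continuous into a larger space $Y$, with $X$ reflexive and densely embedded in $Y$, is weakly continuous into $X$.
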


\begin{proof}
This follows from the estimates of Proposition \ref{HigherRegularity} and Theorem 4 in Section 5.9 of \cite{evans}.
\end{proof}

\begin{cor} \label{SobolevCorollary}
The solution of Proposition \ref{HigherRegularity} satisfies
\begin{equation*}
    \textbf{u}\in L^2(0, T; C^{0, 1/2}(\overbar{\Omega}))
\end{equation*}
after potentially being redefined on a set of measure $0$ with the estimate
\begin{equation}
    \Vert \textbf{u} \Vert_{L^2(0, T; C^{0, \, 1/2})}^2 \leq C_S^2 \Vert \textbf{u} \Vert_{L^2(0, T; \textbf{H}^2)}^2. 
\end{equation}
\end{cor}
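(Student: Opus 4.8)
The plan is to obtain this as a pointwise-in-time application of the Sobolev embedding theorem, followed by integration in the time variable. The key observation is purely dimensional: we work on $\Omega \subset \mathbb{R}^3$, so for the scalar space $H^2 = W^{2,2}$ we have $kp = 2 \cdot 2 = 4 > 3 = n$ with $k - n/p = 2 - 3/2 = 1/2$, which is not an integer. Hence the General Sobolev inequalities (Evans, Section 5.6.3) furnish a continuous embedding $H^2(\Omega) \hookrightarrow C^{0, 1/2}(\overbar{\Omega})$, valid both on the smooth bounded domain $U$ and on the torus $\mathbb{T}$. This yields a constant $C_S$ depending only on the geometry of $\Omega$ (and the dimension $n = 3$) so that every scalar function $w \in H^2$ has a Hölder-continuous representative satisfying $\Vert w \Vert_{C^{0, 1/2}(\overbar{\Omega})} \leq C_S \Vert w \Vert_{H^2}$.

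First I would pass from the scalar embedding to the vector field $\textbf{u}$. For a.e. $t \in [0, T]$, Proposition \ref{HigherRegularity} gives $\textbf{u}(t) \in \textbf{H}^2$, so each component $u_i(t)$ lies in $H^2$ and hence admits a $C^{0, 1/2}$ representative; redefining $\textbf{u}(t)$ on a null set of times to be this representative, I apply the scalar estimate to each of the three components and combine them (adjusting $C_S$ to absorb the harmless factor coming from summing over $i = 1, 2, 3$ in the vector norm) to conclude
\begin{equation*}
    \Vert \textbf{u}(t) \Vert_{C^{0, 1/2}(\overbar{\Omega})} \leq C_S \Vert \textbf{u}(t) \Vert_{\textbf{H}^2}
\end{equation*}
for a.e. $0 \leq t \leq T$.

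Finally I would square this inequality and integrate over $[0, T]$. The map $t \mapsto \Vert \textbf{u}(t) \Vert_{C^{0, 1/2}}$ is measurable because the embedding $\textbf{H}^2 \hookrightarrow C^{0, 1/2}$ is a bounded linear operator and therefore preserves the strong measurability of $t \mapsto \textbf{u}(t)$; this makes the $L^2$-in-time norm well defined. Integrating gives exactly
\begin{equation*}
    \Vert \textbf{u} \Vert_{L^2(0, T; C^{0, 1/2})}^2 = \int_0^T \Vert \textbf{u}(t) \Vert_{C^{0, 1/2}}^2 \, dt \leq C_S^2 \int_0^T \Vert \textbf{u}(t) \Vert_{\textbf{H}^2}^2 \, dt = C_S^2 \Vert \textbf{u} \Vert_{L^2(0, T; \textbf{H}^2)}^2,
\end{equation*}
and the finiteness of the right-hand side, hence the membership $\textbf{u} \in L^2(0, T; C^{0, 1/2}(\overbar{\Omega}))$, follows from $\textbf{u} \in L^2(0, T; \textbf{H}^2)$ established in Proposition \ref{HigherRegularity}. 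There is no substantial obstacle here: the argument is a direct application of a standard embedding, and the only points requiring care are pinning down the correct Hölder exponent $1/2$ from the dimensional count and verifying the time-measurability needed to form the $L^2(0, T; C^{0, 1/2})$ norm, both of which are routine.
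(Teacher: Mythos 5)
Your proposal is correct and follows essentially the same route as the paper: a pointwise-in-time application of the Sobolev embedding $H^2(\Omega)\hookrightarrow C^{0,1/2}(\overbar{\Omega})$ (with $k-n/p=2-3/2=1/2$), followed by squaring and integrating over $[0,T]$. The extra remarks on componentwise reduction and time-measurability are details the paper leaves implicit, but they do not change the argument.
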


\begin{proof}
We have that for almost every $0\leq t \leq T$ $\textbf{u}(t) \in \textbf{H}^2$. Applying the Sobolev embedding theorem for $n=3$, $p=2$, and $k=2$ we obtain
\begin{equation*}
    \Vert \textbf{u} \Vert_{C^{0, \, 1/2}(\overbar{\Omega})}^2 \leq C_S^2 \Vert \textbf{u} \Vert_{\textbf{H}^2}^2
\end{equation*}
for some constant $C_S$ (where $S$ stands for Sobolev) depending only on $k, p, n$ and $\Omega$, after potentially redefining $\textbf{u}$ on a set of measure $0$. Integrating with respect to time completes the corollary.
\end{proof}

Next, we prove that our solution is in fact unique.

\subsection{Uniqueness of the Solution} The proof of the uniqueness of the strong solution is standard, with just some minor modifications for our particular model.

\begin{proposition} \label{PropositionUniqueness}
The solution of Proposition \ref{HigherRegularity} is in fact unique
\end{proposition}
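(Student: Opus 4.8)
The plan is to prove uniqueness via the standard energy method applied to the difference of two solutions. Suppose $\textbf{u}$ and $\tilde{\textbf{u}}$ are both strong solutions of \eqref{QRNSMOmega} with the same initial data $\textbf{u}_0$ and forcing $\textbf{f}$, and set $\textbf{w} = \textbf{u} - \tilde{\textbf{u}}$. Subtracting the two weak formulations and testing against $\textbf{v} = \textbf{w}$ (which is legitimate since each solution lies in $L^2(0,T;\textbf{V})$ with $\textbf{w}' \in L^2(0,T;\textbf{L}^2)$, so $\frac{1}{2}\frac{d}{dt}\|\textbf{w}\|^2 = (\textbf{w}',\textbf{w})$) yields an identity of the form
\begin{equation*}
    \frac{1}{2}\frac{d}{dt}\|\textbf{w}\|^2 + \alpha\|\nabla\textbf{w}\|^2 + \left(B[\textbf{u},\textbf{u},\textbf{w}] - B[\tilde{\textbf{u}},\tilde{\textbf{u}},\textbf{w}]\right) = 0.
\end{equation*}
Everything then reduces to estimating the nonlinear difference so that it can be absorbed by $\alpha\|\nabla\textbf{w}\|^2$ plus a multiple of $\|\textbf{w}\|^2$, after which Gronwall's inequality with the initial condition $\textbf{w}(0)=0$ forces $\textbf{w}\equiv 0$.

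The main work is controlling the nonlinear term, and here the structure of our modified convection is both the obstacle and the advantage. First I would split the difference by inserting an intermediate term:
\begin{equation*}
    B[\textbf{u},\textbf{u},\textbf{w}] - B[\tilde{\textbf{u}},\tilde{\textbf{u}},\textbf{w}] = B[\textbf{u},\textbf{w},\textbf{w}] + \left(B[\textbf{u},\tilde{\textbf{u}},\textbf{w}] - B[\tilde{\textbf{u}},\tilde{\textbf{u}},\textbf{w}]\right).
\end{equation*}
The first piece $B[\textbf{u},\textbf{w},\textbf{w}]$ is the analogue of the classical term; because the velocity field $c\textbf{u}/\sqrt{c^2+|\textbf{u}|^2}$ is \emph{bounded} by $c$ and is (weakly) divergence free, I expect $B[\textbf{u},\textbf{w},\textbf{w}]=0$ by the usual antisymmetry obtained through integration by parts, exactly as in the unmodified Navier-Stokes case. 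This is a genuine simplification our model provides and should be checked carefully, since the divergence-free property of $\textbf{u}$ (hence of the modified field, which is a scalar multiple of $\textbf{u}$ pointwise — though note the scalar depends on $|\textbf{u}|$, so the modified field need \textbf{not} be divergence free and this antisymmetry may fail). If it does fail, I would instead retain $B[\textbf{u},\textbf{w},\textbf{w}]$ and bound it directly using \eqref{BBound} together with the higher regularity of $\textbf{u}$ from Proposition \ref{HigherRegularity}.

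The second piece involves the difference of the two nonlinear coefficients at fixed argument $\tilde{\textbf{u}}$, namely the quantity
\begin{equation*}
    \frac{c u_j}{\sqrt{c^2+|\textbf{u}|^2}} - \frac{c \tilde{u}_j}{\sqrt{c^2+|\tilde{\textbf{u}}|^2}},
\end{equation*}
and the key point is that the map $\textbf{z}\mapsto c\textbf{z}/\sqrt{c^2+|\textbf{z}|^2}$ is globally Lipschitz with a constant depending only on $c$, since its derivative is uniformly bounded. Hence this difference is pointwise bounded by $C|\textbf{w}|$, giving a factor I can pair against $|\nabla\tilde{\textbf{u}}|\,|\textbf{w}|$ and integrate. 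To close the estimate I would use Hölder's inequality and the embedding $\textbf{H}^2 \hookrightarrow \textbf{L}^\infty$ (or $C^{0,1/2}$, via Corollary \ref{SobolevCorollary}) to put $\nabla\tilde{\textbf{u}}$ or $\tilde{\textbf{u}}$ in a high-integrability space, producing a bound of the form $C\|\textbf{w}\|\,\|\nabla\textbf{w}\| + C\|\nabla\tilde{\textbf{u}}\|_{\textbf{L}^\infty}\|\textbf{w}\|^2$; the first summand is split by Cauchy's inequality with $\varepsilon$ to absorb $\varepsilon\|\nabla\textbf{w}\|^2$ into the viscous term. The hard part is ensuring the coefficient multiplying $\|\textbf{w}\|^2$ is an integrable function of $t$ — this is exactly where the strong-solution regularity $\textbf{u},\tilde{\textbf{u}}\in L^2(0,T;\textbf{H}^2)$ is indispensable, since it guarantees $\|\nabla\tilde{\textbf{u}}(t)\|_{\textbf{L}^\infty}$ (or an $L^\infty$-in-space norm) belongs to $L^2(0,T)\subset L^1(0,T)$. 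With an integrable coefficient $\phi(t)$ in hand, the differential inequality $\frac{d}{dt}\|\textbf{w}\|^2 \leq \phi(t)\|\textbf{w}\|^2$ together with $\|\textbf{w}(0)\|^2=0$ yields $\|\textbf{w}(t)\|^2 \leq \|\textbf{w}(0)\|^2\exp\!\left(\int_0^t\phi\right)=0$ by Gronwall, establishing uniqueness on $[0,T]$.
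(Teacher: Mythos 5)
Your overall strategy is the same as the paper's: form the difference $\textbf{w}=\textbf{u}-\tilde{\textbf{u}}$, use the decomposition $B[\textbf{u},\textbf{u},\textbf{w}]-B[\tilde{\textbf{u}},\tilde{\textbf{u}},\textbf{w}]=B[\textbf{u},\textbf{w},\textbf{w}]+\left(B[\textbf{u},\tilde{\textbf{u}},\textbf{w}]-B[\tilde{\textbf{u}},\tilde{\textbf{u}},\textbf{w}]\right)$, exploit the pointwise Lipschitz bound $\left\vert c\textbf{u}/\sqrt{c^2+|\textbf{u}|^2}-c\tilde{\textbf{u}}/\sqrt{c^2+|\tilde{\textbf{u}}|^2}\right\vert\leq (C_n/c)|\textbf{w}|$ (this is exactly inequality \eqref{AppendixInequality}, proved in Appendix \ref{AppendixA}), and finish with Gronwall. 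Your self-correction on the first piece is also right: since the scalar factor $c/\sqrt{c^2+|\textbf{u}|^2}$ depends on $|\textbf{u}|$, the modified advecting field is not divergence free, the classical antisymmetry $B[\textbf{u},\textbf{w},\textbf{w}]=0$ fails, and the paper simply bounds this term by $C_B\Vert\nabla\textbf{w}\Vert\,\Vert\textbf{w}\Vert$ using \eqref{BBound} and absorbs it --- which is your stated fallback.

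The one step that does not work as written is the closing estimate for the second piece. In three dimensions $\textbf{H}^2$ embeds into $C^{0,1/2}$, so $\tilde{\textbf{u}}$ itself is bounded, but $\nabla\tilde{\textbf{u}}$ only lands in $\textbf{H}^1\hookrightarrow\textbf{L}^6$; the quantity $\Vert\nabla\tilde{\textbf{u}}(t)\Vert_{\textbf{L}^\infty}$ is \emph{not} controlled by $\Vert\tilde{\textbf{u}}(t)\Vert_{\textbf{H}^2}$ and hence is not known to be finite, let alone to lie in $L^1(0,T)$. So the coefficient $\phi(t)$ in your final differential inequality is not yet integrable and Gronwall cannot be invoked. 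The repair is standard and is what the paper does: estimate $\int_\Omega|\nabla\tilde{\textbf{u}}|\,|\textbf{w}|^2\,dx\leq\Vert\nabla\tilde{\textbf{u}}\Vert_{\textbf{L}^2}\Vert\textbf{w}\Vert_{\textbf{L}^4}^2$, apply the interpolation $\Vert\textbf{w}\Vert_{\textbf{L}^4}\leq\Vert\textbf{w}\Vert_{\textbf{L}^2}^{1/4}\Vert\nabla\textbf{w}\Vert_{\textbf{L}^2}^{3/4}$, and then Young's inequality with exponents $4/3$ and $4$ to obtain $\frac{\alpha}{2}\Vert\nabla\textbf{w}\Vert_{\textbf{L}^2}^2+C_\varepsilon\Vert\nabla\tilde{\textbf{u}}\Vert_{\textbf{L}^2}^4\Vert\textbf{w}\Vert_{\textbf{L}^2}^2$; the coefficient $\Vert\nabla\tilde{\textbf{u}}\Vert_{\textbf{L}^2}^4$ is bounded uniformly in $t$ because $\tilde{\textbf{u}}\in L^\infty(0,T;\textbf{V})$ by \eqref{EstimateSupV}, so in fact the $\textbf{H}^2$ regularity is not needed at this step at all. (Alternatively, $\Vert\nabla\tilde{\textbf{u}}\Vert_{\textbf{L}^3}\Vert\textbf{w}\Vert_{\textbf{L}^6}\Vert\textbf{w}\Vert_{\textbf{L}^2}$ with $\Vert\tilde{\textbf{u}}\Vert_{\textbf{H}^2}^2\in L^1(0,T)$ as the Gronwall weight also closes the argument.) With either fix your proof goes through and coincides with the paper's.
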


\begin{proof}

Let $\textbf{u}_1$ and $\textbf{u}_2$ both be solutions of \eqref{weakForm} with initial condition \eqref{weakFormU}. Defining
\begin{equation*}
    \textbf{U}\coloneqq \textbf{u}_2 - \textbf{u}_1
\end{equation*}
we obtain, by subtracting the equations satisfied by $\textbf{u}_2$ and $\textbf{u}_1$, that $\textbf{U}$ satisfies the weak problem
\begin{align*}
     \frac{d}{dt} (\textbf{U}, \textbf{v}) + A[\textbf{U}, \textbf{v}] + B[{\textbf{u}}_2, \textbf{u}_2, \textbf{v}] - B[{\textbf{u}}_1, \textbf{u}_1, \textbf{v}]&=0 \\
\textbf{U}(0)&=0. 
\end{align*}
By using linearity in the second argument, we can write
\begin{align*}
    B[{\textbf{u}}_2, \textbf{u}_2, \textbf{v}] - B[{\textbf{u}}_1, \textbf{u}_1, \textbf{v}] &= B[{\textbf{u}}_2, \textbf{u}_2, \textbf{v}] - B[{\textbf{u}}_2, \textbf{u}_1, \textbf{v}] + B[{\textbf{u}}_2, \textbf{u}_1, \textbf{v}] -B[{\textbf{u}}_1, \textbf{u}_1, \textbf{v}] \\
    &=B[{\textbf{u}}_2, \textbf{U}, \textbf{v}]  + B[{\textbf{u}}_2, \textbf{u}_1, \textbf{v}] -B[{\textbf{u}}_1, \textbf{u}_1, \textbf{v}].
    \end{align*}
and thus we can write
\begin{align*}
     \frac{d}{dt} (\textbf{U}, \textbf{v}) + A[\textbf{U}, \textbf{v}]    = -B[{\textbf{u}}_2, \textbf{U}, \textbf{v}] + B[{\textbf{u}}_1, \textbf{u}_1, \textbf{v}] - B[{\textbf{u}}_2, \textbf{u}_1, \textbf{v}]& \\
\textbf{U}(0)=0&. 
\end{align*}
Now, letting
\begin{equation*}
    \textbf{F}=\left(\left[ \frac{c{\textbf{u}}_1}{ \sqrt{c^2 + |{\textbf{u}}_1|^2}    }  - \frac{c{\textbf{u}}_2}{ \sqrt{c^2 + |{\textbf{u}}_2|^2}    } \right]  \cdot \nabla\right)\textbf{u}_1 
\end{equation*}
we see that $\textbf{F}\in \textbf{L}^2$ and we can write 
\begin{align*}
    \frac{d}{dt} (\textbf{U}, \textbf{v}) + A[\textbf{U}, \textbf{v}]   &=-B[{\textbf{u}}_2, \textbf{U}, \textbf{v}]+ (\textbf{F}, \textbf{v})& \\
\textbf{U}(0)&=0. 
\end{align*}
Letting $\textbf{v}=\textbf{U}$ we obtain
\begin{align} \label{EquationU}
    \frac{d}{dt} (\textbf{U}, \textbf{U})+ A[\textbf{U}, \textbf{U}]   &=-B[{\textbf{u}}_2, \textbf{U}, \textbf{U}]+ (\textbf{F}, \textbf{U})& 
\end{align}
Now, we analyze 
\begin{align*}
    \left\vert B[{\textbf{u}}_2, \textbf{U}, \textbf{U}]   \right\vert \leq C_B \Vert \nabla \textbf{U} \Vert \Vert \textbf{U} \Vert \leq C_B\varepsilon \Vert \nabla \textbf{U} \Vert^2 + \frac{C_B}{4\varepsilon} \Vert \textbf{U} \Vert^2
\end{align*}
by \eqref{BBound} and we take $\varepsilon=\alpha/2C_B$ so that
\begin{align*}
     \left\vert B[{\textbf{u}}_2, \textbf{U}, \textbf{U}]   \right\vert \leq  \frac{\alpha}{2}\Vert \nabla \textbf{U} \Vert^2 + \frac{C_B^2}{2\alpha } \Vert \textbf{U} \Vert^2
\end{align*}
Next, we analyze $(\textbf{F}, \textbf{U})$.

The key is that using some simple algebra, which is relegated to Appendix \ref{AppendixA}, we can estimate
\begin{equation} \label{AppendixInequality}
    \left\vert \frac{c{\textbf{u}}_1}{ \sqrt{c^2 + |{\textbf{u}}_1|^2}    }  - \frac{c{\textbf{u}}_2}{ \sqrt{c^2 + |{\textbf{u}}_2|^2}    } \right\vert \leq \frac{C_n}{c} | {\textbf{u}}_2 - {\textbf{u}}_1 |=\frac{C_n}{c}|{\textbf{U}}|
\end{equation}
where the constant $C_n$ depends only on the dimension. In our case where $n=3$ we can take $C_3=12$. 

Therefore
\begin{align*}
    \left\vert (\textbf{F}, \textbf{U}) \right\vert &\leq \int_{U} \left\vert \left(\left[ \frac{c{\textbf{u}}_1}{ \sqrt{c^2 + |{\textbf{u}}_1|^2}    }  - \frac{c{\textbf{u}}_2}{ \sqrt{c^2 + |{\textbf{u}}_2|^2}    } \right]  \cdot \nabla\right)\textbf{u}_1  \right\vert \left\vert \textbf{U} \right\vert \, dx \\
    &\leq  \frac{12}{c} \int_\Omega  |\nabla \textbf{u}_1|  |\textbf{U}|^2 \, dx \\
    & \leq \frac{12}{c} \Vert \nabla \textbf{u}_1 \Vert_{\textbf{L}^2} \Vert \textbf{U} \Vert_{\textbf{L}^4}^2 \\
    &\leq \frac{12}{c} \Vert \nabla \textbf{u}_1 \Vert_{\textbf{L}^2} \left( \Vert \textbf{U} \Vert_{\textbf{L}^2}^{1/4} \Vert \nabla \textbf{U} \Vert_{\textbf{L}^2}^{3/4} \right)^2
    \\
    &\leq \frac{12}{c} \Vert \nabla \textbf{u}_1 \Vert_{\textbf{L}^2}  \Vert \textbf{U} \Vert_{\textbf{L}^2}^{1/2} \Vert \nabla \textbf{U} \Vert_{\textbf{L}^2}^{3/2}.
\end{align*}
Next, we apply Young's inequality with $\varepsilon$ for $p=4/3$ and $q=4$ and choose the $\varepsilon$  so that
\begin{equation*}
    \frac{12}{c}\varepsilon = \frac{\alpha}{2}
\end{equation*}
which then gives
\begin{align*}
     \left\vert (\textbf{F}, \textbf{U}) \right\vert \leq \frac{\alpha}{2} \Vert \nabla \textbf{U} \Vert_{\textbf{L}^2}^2 + C_{\varepsilon} \Vert \nabla \textbf{u}_1 \Vert_{\textbf{L}^2}^4 \Vert \textbf{U} \Vert_{\textbf{L}^2}^2
\end{align*}
and therefore using \eqref{EquationU} we obtain
\begin{small}
\begin{align*}
    \frac{1}{2}\frac{d}{dt}\Vert \textbf{U}(t) \Vert_{\textbf{L}^2}^2 + \alpha \Vert \nabla \textbf{U}(t)  \Vert_{\textbf{L}^2}^2 &\leq \frac{\alpha}{2}\Vert \nabla \textbf{U}(t) \Vert^2 + \frac{C_B^2}{2\alpha } \Vert \textbf{U}(t) \Vert^2 + \frac{\alpha}{2} \Vert \nabla \textbf{U}(t) \Vert_{\textbf{L}^2}^2 + C_{\varepsilon} \Vert \nabla \textbf{u}_1(t) \Vert_{\textbf{L}^2}^4 \Vert \textbf{U}(t) \Vert_{\textbf{L}^2}^2 \\
    &=  \alpha \Vert \nabla \textbf{U}(t)  \Vert_{\textbf{L}^2}^2 + \left( \frac{C_B^2}{2\alpha} + C_\varepsilon \Vert \nabla \textbf{u}_1(t) \Vert_{\textbf{L}^2}^4   \right) \Vert \textbf{U}(t) \Vert_{\textbf{L}^2}^2.
\end{align*}
\end{small}

Now, since $\textbf{u}_1 \in L^{\infty}(0, T; \textbf{V})$ we can use \eqref{EstimateSupV} to bound $C_\varepsilon \Vert \nabla \textbf{u}_1(t) \Vert_{\textbf{L}^2}^4 \leq K_1$ for some $K_1$ for a.e. $0\leq t \leq T$. Thus rearranging by subtracting the $ \alpha \Vert \nabla \textbf{U}(t)  \Vert_{\textbf{L}^2}^2$ term from both sides and multiplying by $2$ we can write
\begin{equation*}
    \frac{d}{dt} \Vert \textbf{U}(t) \Vert_{\textbf{L}^2}^2 \leq K \Vert \textbf{U}(t) \Vert_{\textbf{L}^2}^2 
\end{equation*}
for some $K\geq 0$ for almost every $0\leq t \leq T$. Hence by Gronwall's inequality we have
\begin{equation*}
      \Vert \textbf{U}(t)  \Vert_{\textbf{L}^2}^2 \leq e^{Kt}  \Vert  \textbf{U}(0)  \Vert_{\textbf{L}^2}^2
\end{equation*}
for $0\leq t\leq T$. But since $\textbf{U}(0)=0$ we get 
\begin{equation*}
    \Vert \textbf{U}(t)  \Vert_{\textbf{L}^2}^2 = 0
\end{equation*}
for all $0\leq t \leq T$ completing the proof of uniqueness.
\end{proof}

\subsection{Recovering the Pressure}

We would also like to make sense of the pressure term in \eqref{QRNSMOmega}. The pressure can be recovered (in the distribution) exactly as at the end of Section 3.3.5 in \cite{temam} so we omit the details. Following that construction, we find that the pressure is defined by
\begin{equation*}
    p=\frac{\partial P }{\partial t}
\end{equation*}
for some 
\begin{equation*}
    P\in C([0, T]; L^2)
\end{equation*}
understood in the distribution sense.

\section{Conclusion}

In this paper we suggested a modification to the usual Navier-Stokes model based on some intuition derived from special relativity. This modification led us to establishing the existence of unique global strong solutions for the incompressible equations in dimension $n=3$ for any $T>0$ as long as the forcing function $\textbf{f}$ was in an appropriate space. If the model proves interesting to other mathematicians, in a future paper we will investigate the higher regularity of the solutions, which should not be too difficult as the solutions should be amenable to the standard techniques.

We comment that though justifiable, the introduction of the modified term in our equations could be considered somewhat ad-hoc, though no more so than in \cite{JaraczLee}. As mentioned, in that paper, and others, the modifications are introduced by considering Brownian motion, while here we introduced them as a consequence of special and general relativity. In a future paper we will study what happens when we derive the equations by directly using some spacial relativistic principles. One possibility is replacing the mass term in Newton's law by the special relativistic mass and studying the consequences of this. This might lead to a model which should be more intuitively palatable for some. However, the fact that incompressibility has no real place in special relativity might produce interesting consequences.

\appendix  

\section{Proof of Inequality \ref{AppendixInequality}}\label{AppendixA}

We consider the difference
\begin{equation*}
    \frac{ \tilde{\textbf{y}} }{\sqrt{ c^2+|\tilde{\textbf{y}}|^2  }} - \frac{ \textbf{y} }{\sqrt{ c^2+|\textbf{y}|^2  }}
\end{equation*}
by analyzing this vector component wise. In general, this could be an $n$-vector, though of course we are interested in the case $n=3$.
Notice that
\begin{align*}
    \frac{ \tilde{y}_j }{\sqrt{ c^2+|\tilde{\textbf{y}}|^2  }} - \frac{ y_j }{\sqrt{ c^2+|\textbf{y}|^2  }}&=  \frac{ \tilde{y}_j }{\sqrt{ c^2+|\tilde{\textbf{y}}|^2  }}-\frac{ y_j }{\sqrt{ c^2+|\tilde{\textbf{y}}|^2  }} +\frac{ y_j }{\sqrt{ c^2+|\tilde{\textbf{y}}|^2  }} - \frac{ y_j }{\sqrt{ c^2+|\textbf{y}|^2  }}\\
    &= \frac{ \tilde{y}_j - y_j }{\sqrt{ c^2+|\tilde{\textbf{y}}|^2  }} + y_j \left(  \frac{ 1 }{\sqrt{ c^2+|\tilde{\textbf{y}}|^2  }} - \frac{ 1 }{\sqrt{ c^2+|\textbf{y}|^2  }} \right).
\end{align*}
Next we see
\begin{align*}
 \frac{ 1 }{\sqrt{ c^2+|\tilde{\textbf{y}}|^2  }} - \frac{ 1 }{\sqrt{ c^2+|\textbf{y}|^2  }} & = \left( \frac{  \sqrt{ c^2+|\textbf{y}|^2  } -  \sqrt{ c^2+|\tilde{\textbf{y}}|^2  }    }{  \sqrt{ c^2+|\tilde{\textbf{y}}|^2  }  \sqrt{ c^2+|\textbf{y}|^2  }    } \right) \frac{   \sqrt{ c^2+|\textbf{y}|^2  } +  \sqrt{ c^2+|\tilde{\textbf{y}}|^2  } }{  \sqrt{ c^2+|\textbf{y}|^2  } +  \sqrt{ c^2+|\tilde{\textbf{y}}|^2  } } \\
 &= \frac{|\textbf{y}|^2 - |\tilde{\textbf{y}}|^2}{ \sqrt{ c^2+|\tilde{\textbf{y}}|^2  }  \sqrt{ c^2+|\textbf{y}|^2  }    ( \sqrt{ c^2+|\textbf{y}|^2  } +  \sqrt{ c^2+|\tilde{\textbf{y}}|^2  })     } \\
  &= \frac{   \sum_{k=1}^n (y_k+\tilde{y}_k)(y_k - \tilde{y}_k )       }{ \sqrt{ c^2+|\tilde{\textbf{y}}|^2  }  \sqrt{ c^2+|\textbf{y}|^2  }    ( \sqrt{ c^2+|\textbf{y}|^2  } +  \sqrt{ c^2+|\tilde{\textbf{y}}|^2  })     }
\end{align*}
and therefore 
\begin{small}
\begin{align*}
    \left \vert \frac{ \tilde{y}_j }{\sqrt{ c^2+|\tilde{\textbf{y}}|^2  }} - \frac{ y_j }{\sqrt{ c^2+|\textbf{y}|^2  }} \right \vert &\leq \left\vert \frac{ \tilde{y}_j - y_j }{\sqrt{ c^2+|\tilde{\textbf{y}}|^2  }} \right\vert + |y_j| \frac{   \sum_{k=1}^n (|y_k|+|\tilde{y}_k|)|y_k - \tilde{y}_k |       }{ \sqrt{ c^2+|\tilde{\textbf{y}}|^2  }  \sqrt{ c^2+|\textbf{y}|^2  }    ( \sqrt{ c^2+|\textbf{y}|^2  } +  \sqrt{ c^2+|\tilde{\textbf{y}}|^2  })     } \\
    &\leq \frac{n+1}{c}|\tilde{\textbf{y}}-\textbf{y}|
\end{align*}
\end{small}
and so 
\begin{equation*}
    \left\vert \frac{ \tilde{\textbf{y}} }{\sqrt{ c^2+|\tilde{\textbf{y}}|^2  }} - \frac{ \textbf{y} }{\sqrt{ c^2+|\textbf{y}|^2  }} \right\vert \leq \frac{n(n+1)}{c} \left\vert \tilde{\textbf{y}}-\textbf{y} \right\vert
\end{equation*}
and so we can take $C_n=n(n+1)$ and $C_3=12$.

\bibliographystyle{model1-num-names}
\bibliography{ref_new}

\end{document}